\documentclass[psamsfonts]{amsart}

\usepackage{amssymb,amsfonts}
\usepackage[all,arc]{xy}
\usepackage{enumerate}
\usepackage{mathrsfs}
\usepackage{graphicx}

\usepackage[mathscr]{euscript}

\usepackage{orcidlink}

\newtheorem{theorem}{Theorem}[section]
\newtheorem{corollary}[theorem]{Corollary}
\newtheorem{proposition}[theorem]{Proposition}
\newtheorem{lemma}[theorem]{Lemma}

\theoremstyle{definition}
\newtheorem{definition}[theorem]{Definition}

\newtheorem{example}[theorem]{Example}
\newtheorem{examples}[theorem]{Examples}

\newtheorem{remark}[theorem]{Remark}

\DeclareMathOperator{\ord}{ord}
\DeclareMathOperator{\sml}{sml}

\numberwithin{equation}{section}

\begin{document}
	
\title{Algebraic properties of overflow semirings}
	
\author{Peyman Nasehpour\orcidlink{0000-0001-6625-364X}}
	
\address{Peyman Nasehpour\\
		Education Department\\
		The New York Academy of Sciences\\
		New York, NY, USA}
\email{nasehpour@gmail.com}
	
\subjclass[2020]{Primary 16Y60; Secondary 13A15, 13C15, 13E05, 06F25}
	
\keywords{overflow semiring, positive information algebra, ideals of overflow semirings, Krull dimension, Noetherian semirings, Artinian semirings}

\begin{abstract}
We introduce the overflow semiring $S = A \uplus_{\ord} L$, extending a positive information algebra $A$ by a join-semilattice $L$, where elements of $L$ dominate $A$ and arithmetic in $L$ reduces to the join. This models overflow in computational systems and generalizes the transition from finite to infinite cardinal arithmetic. We characterize the idempotent elements of $S$ and $S[X]$, fully classify idempotent power series over cardinal numbers, describe the structure of prime and maximal ideals, compute the Krull dimension of $S$ ($\dim S = \dim A + |L|$ for well-ordered finite $L$), and establish Noetherian and Artinian criteria.
\end{abstract}

\maketitle
	
\section{Introduction}
	
Inspired by the arithmetic of the semiring of cardinal numbers, we introduce a novel extension of a positive information algebra by means of a join-semilattice. More precisely, given a positive information algebra $A$ and a join-semilattice $L$, we construct a new positive information algebra
\[
S = A \uplus_{\ord} L
\]
containing $A$ as a subsemiring and $L$ as an upper semilattice component. Conceptually, this construction abstracts the passage from natural number arithmetic to cardinal arithmetic: within the lower component $A$, operations follow the original algebraic structure, whereas once computations enter the upper component $L$, the behavior becomes order-dominated, resembling the arithmetic of infinite cardinals.

Beyond its intrinsic algebraic interest, this construction also admits a natural interpretation in engineering and computational systems. In many practical settings, ordinary arithmetic governs system behavior only within a safe operational range, while overflow push the system into a different computational regime. The semilattice component $L$ models such exceptional states, where arithmetic no longer behaves classically but is instead governed by ordered propagation rules (see Example \ref{fusionrestrictedmaxplusalgebra}).
	
Since the terminology of semiring theory is not completely standardized \cite[p. 3]{Glazek2002}, we begin by fixing the notation and terminology used throughout the paper. We also briefly summarize the main results obtained herein.

A bimagma $(H,+,\cdot)$ is called a hemiring if $(H,+,0)$ is a commutative monoid, $(H,\cdot)$ is a semigroup, multiplication distributes over addition from both sides, and $0$ is multiplicatively absorbing; that is, $h0 = 0h = 0$ for all $h \in H$. A hemiring $H$ is called commutative if $ab = ba$ for all $a,b \in H$. A hemiring $S$ is called a semiring if it possesses a nonzero multiplicative identity element $1$ \cite[p. 1]{Golan1999}. Throughout this paper, all semirings are assumed to be commutative.

A semiring $S$ is called zerosumfree if $a+b=0$ implies $a=b=0$ for all $a,b \in S$, and entire if $ab=0$ implies $a=0$ or $b=0$ for all $a,b \in S$. A semiring that is both zerosumfree and entire is called an information algebra \cite[p. 4]{Golan1999}. We say that a semiring $S$ is conical if $ab=1$ implies $a=1$ (and hence $b=1$; see Definition \ref{conicalsemiringdef}). Finally, a semiring $S$ is called positive if it is equipped with a partial order $\leq$ compatible with both addition and multiplication, and such that $0$ is the least element \cite[p. 27]{Golan2003}.
	
Let $(A, +, \cdot, \leq_A)$ be a positive information algebra and let $(L, \leq_L)$ be a join-semilattice, with $A$ and $L$ disjoint. Set $S = A \cup L$. We extend the orders on $A$ and $L$ to an order on $S$ by declaring
\[
a \leq \ell
\quad\text{for all } a \in A,\ \ell \in L,
\]
while $\ell \leq a$ never holds for $\ell \in L$ and $a \in A$. Addition and multiplication on $S$ are defined for $x,y \in S$ by
\[
x + y =
\begin{cases}
	x +_A y & \text{if } x,y \in A,\\
	\sup\{x,y\} & \text{otherwise},
\end{cases}
\qquad
x \cdot y =
\begin{cases}
	x \cdot_A y & \text{if } x,y \in A\setminus\{0\},\\
	0 & \text{if } x=0 \text{ or } y=0,\\
	\sup\{x,y\} & \text{otherwise}.
\end{cases}
\] The upper semilattice component $L$ may be interpreted as an overflow regime in which the ordinary arithmetic of the base semiring $A$ is replaced by order-dominated behavior (see Definition \ref{fusionsemiringlatticedef}). In \S\ref{sec:overflowsemiring}, we prove that the resulting structure
\[
S = A \cup L
\]
forms a positive information algebra (Theorem \ref{fusionsemiringlatticeordered}). Motivated by its interpretation as a transition from ordinary arithmetic to an overflow regime, we call
\[
S = A \uplus_{\ord} L
\]
the overflow semiring associated to $A$ and $L$ (see Definition \ref{overflowsemiring}). We also show that for every $s \in L$, the lower set $S^{\leq s}$ is a subsemiring of $S$; if $L$ is totally ordered, then so is $S^{<s}$ (cf. Proposition \ref{subsemiringsfusionsemiringlattice}), yielding a directed filtration $\{S^{\leq s}\}_{s \in L}$ of $S$ (cf. Corollary \ref{DirectedFiltration}).
	
After introducing overflow semirings, in \S\ref{sec:distinguishedelementsoverflowsemirings} we study their distinguished elements. For instance, Proposition \ref{fusionsemiringlatticeidempotent} and Proposition \ref{fusionsemiringlatticevnr} characterize idempotent and von Neumann regular elements of $S$, respectively. We then determine families of multiplicatively idempotent polynomials and formal power series over overflow semirings (see Propositions \ref{idempotentpolynomialsoverflow}, \ref{fusionsemiringlatticepowerseriesidempotent1}, and \ref{fusionsemiringlatticepowerseriesidempotent2}). Special attention is given to formal power series over the semiring of cardinal numbers, for which Corollary \ref{idempotentsformalpowerseriescardinalnumbers} provides a complete characterization of the idempotent elements. Recall that an element $a$ of a semiring $S$ is called small if $b \in S \setminus U(S)$ implies $a+b \in S \setminus U(S)$ \cite[p. 77]{Golan1999}. We denote the set of all small elements of $S$ by $\sml(S)$. In Theorem \ref{fusionsemiringlatticesmallelements}, we show that if $S=A\uplus_{\ord}L$ is the overflow semiring, then \[\sml(S) = \sml(A)\cup L.\]
	
A nonempty subset $I$ of $S$ is called an ideal, denoted $I \trianglelefteq S$, if $a+b \in I$ and $sa \in I$ for all $a,b \in I$ and $s \in S$ \cite[p. 65]{Golan1999}. The set of all ideals of a semiring $S$ is denoted by $\operatorname{Id}(S)$. An ideal is finitely generated if it is generated by a finite subset of $S$, and principal if it is generated by a single element $a \in S$, denoted $(a)$ \cite[p. 68]{Golan1999}. Since $S$ is commutative,
\[
(a)=aS=\{as:s\in S\}.
\] An ideal $I$ is proper if $I \neq S$, and subtractive if $x+y \in I$ and $x \in I$ imply $y \in I$ for all $x,y \in S$ \cite[p. 66]{Golan1999}. A proper ideal $P$ is called prime if $ab \in P$ implies $a \in P$ or $b \in P$ (see Corollary 7.6 in \cite{Golan1999}).
	
The final section investigates the ideal theory of overflow semirings. We first show that $S$ is always austere, meaning its only subtractive ideals are trivial (Proposition \ref{fusionsemiringlatticeaustere}), and examine extension and contraction along the inclusion $A \hookrightarrow S$ (Proposition \ref{fusionsemiringlatticeextcontractionideals}). After characterizing the ideals, primes, and maximals of $S$ (Theorems \ref{fusionsemiringlatticemaximalideal} and \ref{fusionsemiringlatticeidealstructure}), we establish the Krull dimension formula:
\[ \dim S = \dim A + \lvert L \rvert \]
(see Theorem \ref{Krulldimensionoffusion}). Finally, assuming $L$ is well-ordered, we prove that $S$ is Noetherian if and only if $A$ is Noetherian (Theorem \ref{fusionsemiringlatticeNoetherian}), and Artinian if and only if $A$ is Artinian and $L$ is a finite chain (Theorem \ref{fusionsemiringlatticeArtinian}).
	
\section{Definition of the overflow semiring}\label{sec:overflowsemiring}
	
\begin{definition}\label{fusionsemiringlatticedef}
	Let $(A, +, \cdot, \leq_A)$ be a positive information algebra and let $(L, \leq_L)$ be a join-semilattice, with $A$ and $L$ disjoint. Set $S = A \cup L$. Define an order on $S$ by:
	\begin{itemize}
		\item preserving the original orders on $A$ and $L$,
		\item $a \leq \ell$ for all $a \in A$, $\ell \in L$,
		\item $\ell \leq a$ never holds for $\ell \in L$, $a \in A$.
	\end{itemize}
	Define addition and multiplication on $S$ for $x, y \in S$ by
	\[
	x + y = \begin{cases}
		x +_A y & \text{if } x, y \in A, \\
		\sup\{x, y\} & \text{otherwise},
	\end{cases}
	\qquad
	x \cdot y = \begin{cases}
		x \cdot_A y & \text{if } x, y \in A\setminus\{0\}, \\
		0 & \text{if } x = 0 \text{ or } y = 0, \\
		\sup\{x, y\} & \text{otherwise}.
	\end{cases}
	\]
\end{definition}

\begin{lemma}\label{fusionsemiringlatticeadditionmonoid} The structure $(S,+,0)$ in Definition \ref{fusionsemiringlatticedef} is a commutative monoid.
\end{lemma}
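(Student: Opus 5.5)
We need to check that $+$ is well defined, commutative, has $0$ as identity, and is associative. First we confirm that $\sup\{x,y\}$ exists whenever one of $x,y$ lies in $L$. If $x,y\in L$, then the supremum in $S$ is the join $x\vee_L y$, because no element of $A$ lies above an element of $L$. If $x\in A$ and $y\in L$, then $x\le y$ by definition of the order, so $\sup\{x,y\}=y$. Thus $x+y\in S$ in every case. Commutativity is immediate: $+_A$ is commutative, and $\sup\{x,y\}=\sup\{y,x\}$. For the identity, take $x\in A$; then $x+0=x+_A0=x$. If instead $x\in L$, then $x+0=\sup\{x,0\}=x$, since $0\in A$ lies below every element of $L$.

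The main step is associativity, $(x+y)+z=x+(y+z)$, which we prove by cases on how many of $x,y,z$ lie in $L$. If none does, this is associativity of $+_A$. If at least one lies in $L$, we claim both sides equal $\sup\{x,y,z\}$, which in turn equals the $L$-join of those arguments that lie in $L$.

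To support this, we first record a lemma: for every $u\in S$ and $\ell\in L$, the sum $u+\ell$ is the least upper bound of $\{u,\ell\}$ in $S$, and it lies in $L$. We also note that when $u,v\in A$, the sum $u+_A v$ is again in $A$.

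With the lemma, each side is computed in two stages. In the inner sum, if either argument is in $L$, the lemma gives a supremum lying in $L$. Otherwise the inner sum lies in $A$, and then the outer sum involves the remaining element, which must be in $L$. Since the elements of $A$ are all dominated by the $L$-elements present, only the $L$-components matter. The mixed case $x,y\in A$, $z\in L$ shows how this works: $(x+y)+z=z$, since $x+_Ay\in A$ lies below $z$, and $x+(y+z)=x+z=z$. All cases then reduce to associativity of $\vee_L$. Throughout, we never need an order relation inside $A$ for these cases, only the fact that all of $A$ lies below all of $L$.

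The only real subtlety is the well-definedness of the supremum together with the observation that sums involving $L$ stay in $L$. Once these are recorded, the case analysis is routine, so we expect no serious obstacle.
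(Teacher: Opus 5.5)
Your proposal is correct and follows essentially the same route as the paper. You check commutativity and the identity directly, then prove associativity by a case analysis on which of $x,y,z$ lie in $A$ or in $L$, with both sides reducing to the join of the $L$-arguments; your preliminary check that $\sup\{x,y\}$ exists in $S$ (equal to $x\vee_L y$, or to the $L$-element in the mixed case) is a small point the paper takes for granted.
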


\begin{proof}
Commutativity of $+$ is clear. For associativity \[(x+y)+z = x+(y+z),\] we consider the location of the arguments. If all three are in $A$, it holds in the semiring. If all three are in $L$, it holds in the semilattice. If one, say, $x$ is in $A$, and others are in $L$, then the value of both sides is $\sup\{y,z\}$. And if two of them are in $A$, say, $x$ and $y$, and $z$ is in $L$, then the value of both sides is $z$ (by commutativity, other permutations follow). The element $0 \in A$ is the identity: for $x \in A$, $x+0 = x$; and for $x \in L$, $x+0 = \sup\{x,0\} = x$ because $0 < x$. Hence $(S,+,0)$ is a commutative monoid.
\end{proof}

\begin{lemma}\label{fusionsemiringlatticemultiplicationmonoid} The structure $(S,\cdot,1)$ in Definition \ref{fusionsemiringlatticedef} is a commutative monoid and $0$ is its absorbing element.
\end{lemma}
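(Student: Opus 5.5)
The plan mirrors the case analysis in the proof of Lemma \ref{fusionsemiringlatticeadditionmonoid}, now organised by the zero/nonzero status and the location of the factors. Three things must be checked: commutativity, absorption by $0$, and the identity property of $1$.

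\emph{Commutativity, absorption, identity.} Commutativity is immediate from the definition, since $\cdot_A$ is commutative and $\sup\{x,y\}$ is symmetric. Absorption of $0$ is built into the second clause of the definition of $x\cdot y$.

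For the identity, note that $1\in A\setminus\{0\}$. If $x\in A\setminus\{0\}$, then $x\cdot 1=x\cdot_A 1=x$. If $x=0$, then $0\cdot 1=0$. If $x\in L$, then $x\cdot 1=\sup\{x,1\}=x$, because $1\le x$ by the ordering of Definition \ref{fusionsemiringlatticedef}.

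\emph{Associativity.} The main work is checking $(xy)z=x(yz)$ for all $x,y,z\in S$, split into cases.

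\emph{Case 1: one of $x,y,z$ is $0$.} Both sides are $0$ by absorption, provided each partial product involving $0$ is $0$; this holds by definition.

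\emph{Case 2: all three lie in $A\setminus\{0\}$.} Here I need that $A$ is entire, so that $xy\neq 0$. Then the outer product is again computed by $\cdot_A$, and associativity in $A$ applies. This is the one point where the information algebra hypothesis is genuinely used: without entireness, $xy$ could be $0$ and the case analysis would misfire. The paper's convention for products in $A$ with $0$ agrees with $\cdot_A$ anyway, so even that would be harmless.

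\emph{Case 3: all nonzero, at least one in $L$.} I would first observe the key fact that for nonzero $u,v$ with at least one of them in $L$, $uv=\sup\{u,v\}\in L$. In this case I claim both sides equal $\sup\{x,y,z\}$, where for elements of $A$ the sup with an $L$-element simply discards them.

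Subcases:
\begin{itemize}
\item[(a)] If $x,y\in A\setminus\{0\}$ and $z\in L$, then $xy\in A\setminus\{0\}$, so $(xy)z=z$. Also $yz=z$, so $x(yz)=z$.
\item[(b)] If exactly one element lies in $A$ and it is in the middle or an end, each side reduces to the sup of the two $L$-elements. This uses that the sup in $S$ of an $A$-element and an $L$-element $\ell$ is $\ell$.
\item[(c)] If all three lie in $L$, associativity of $\sup$ in the semilattice gives the result.
\end{itemize}
By commutativity, the remaining permutations follow.

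\emph{Main obstacle.} I expect the main obstacle to be bookkeeping rather than depth: ensuring that $\sup$ is well defined in $S$ for mixed pairs, and that a product which has already landed in $L$ stays in $L$. Both follow from $L$ being a join-semilattice lying above all of $A$.
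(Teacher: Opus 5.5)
Your proposal is correct and follows essentially the same route as the paper. You check absorption and the identity directly from the definition, and you verify associativity by the same case split: some factor is $0$, all factors lie in $A\setminus\{0\}$, all lie in $L$, or the factors are mixed, in which case the products collapse to the relevant supremum in $L$.

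One small correction to your commentary concerns where entireness of $A$ is used. It is not what makes Case~2 work, since there products involving $0$ agree with $\cdot_A$ anyway, as you note. It is genuinely needed in Case~3(a): $xy\in A\setminus\{0\}$ is exactly what gives $(xy)z=z$ rather than $0$. This is the failure of associativity exhibited in the paper's remark following the proof that $S$ is a semiring.
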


\begin{proof}
First, $0$ is absorbing: for any $x \in S$, if $x \in A$ then $0 \cdot x = 0$; and if $x \in L$ then $0 \cdot x = 0$ by definition (the second case of multiplication). Hence $0 \cdot x = 0 = x \cdot 0$ for all $x \in S$ by commutativity of $\cdot$ in $S$. For associativity \[(x \cdot y) \cdot z = x \cdot (y \cdot z),\] we consider the location of the arguments. If any of $x,y,z$ is $0$, both sides are $0$ by the absorbing property of $0$ just proved. If all three are in $A$, it holds in the semiring. If all three are in $L$, it holds in the semilattice. If one is in $A$ (non-zero) and the others in $L$, both sides equal $\sup\{y,z\}$. If two are in $A$ (non-zero) and one in $L$, both sides equal the element in $L$. The element $1 \in A$ is the identity: for $x \in A$, $x \cdot 1 = x$; and for $x \in L$, $x \cdot 1 = \sup\{x,1\} = x$ because $1 < x$. Hence $(S,\cdot,1)$ is a commutative monoid and $0$ is its absorbing element.
\end{proof}

\begin{proposition}\label{fusionsemiringlatticeorderedinformationalgebra}
	The structure $(S,+,\cdot)$ in Definition \ref{fusionsemiringlatticedef} is a semiring.
\end{proposition}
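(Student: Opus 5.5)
By Lemma \ref{fusionsemiringlatticeadditionmonoid}, $(S,+,0)$ is a commutative monoid. By Lemma \ref{fusionsemiringlatticemultiplicationmonoid}, $(S,\cdot,1)$ is a commutative monoid with absorbing $0$. The identity $1$ is nonzero because it is nonzero in $A$. Since multiplication is commutative, it therefore suffices to verify one distributive law,
\[
x(y+z)=xy+xz \qquad (x,y,z\in S).
\]
I will do this by a case analysis on the location of $x,y,z$, in the same style as the two lemmas.

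First I dispose of the degenerate cases. If $x=0$, both sides are $0$. If $y=0$, then $y+z=z$ and $xy+xz=0+xz=xz$, so both sides agree; the case $z=0$ is symmetric.

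So assume $x,y,z\neq 0$. A preliminary observation is used throughout: since $A$ is zerosumfree and entire, nonzero elements of $A$ have nonzero sums and products. Hence in the case $x,y,z\in A\setminus\{0\}$ all arithmetic stays inside $A\setminus\{0\}$, and distributivity is inherited from $A$. In every other case at least one of $x,y,z$ lies in $L$, and the key auxiliary fact is the following: for $u\in S\setminus\{0\}$ and $\ell\in L$,
\[
u\ell=u+\ell=\sup\{u,\ell\}\in L .
\]
This is the supremum in $L$ when $u\in L$, and equals $\ell$ when $u\in A$.

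I then split according to where the nonzero elements lie.

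\emph{Case $x\in L$.} Since $y+z\neq 0$, the left side is $\sup\{x,y+z\}$. The right side is $\sup\{x,y\}\vee\sup\{x,z\}=\sup\{x,y,z\}$, where the elements of $A$ are absorbed because $A$ lies below $L$. On the left, if $y,z\in A$ then $y+z\in A$ and the left side is $x$; otherwise $y+z=\sup\{y,z\}$ and the left side is $\sup\{x,y,z\}$. In both subcases it matches the right side.

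\emph{Case $x\in A\setminus\{0\}$.} If $y\in L$ but $z\in A$, the left side is $x\cdot y=y$. The right side is $y+xz$, and since $xz\in A$ this equals $y$. If both $y,z\in L$, both sides equal $y\vee z$.

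The main obstacle is purely bookkeeping. Each case must be checked against the right branch of the piecewise definitions, and the zero element has to be isolated beforehand, since multiplication by $0$ does not follow the $\sup$ rule. The entire and zerosumfree hypotheses are what guarantee that $A\setminus\{0\}$ is closed under both operations, so that no mixed expression unexpectedly lands on $0$.
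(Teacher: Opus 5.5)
Your proof is correct and follows the same route as the paper. Both arguments invoke the two monoid lemmas and then verify $x(y+z)=xy+xz$ by cases on whether $x$ is $0$, lies in $A\setminus\{0\}$, or lies in $L$, using zerosumfreeness to guarantee $y+z\neq 0$ when $x\in L$. The only difference is organizational: you dispose of $y=0$ and $z=0$ once at the start, where the paper repeats those subcases under each case for $x$.
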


\begin{proof}
	By Lemma \ref{fusionsemiringlatticeadditionmonoid}, $(S,+,0)$ is a commutative monoid. By Lemma \ref{fusionsemiringlatticemultiplicationmonoid}, $(S,\cdot,1)$ is a commutative monoid with $0$ absorbing. We only need to verify distributivity: \[x \cdot (y + z) = (x \cdot y) + (x \cdot z), \qquad\forall\, x,y,z \in S.\] The verification proceeds by cases on $x$:
	
	\emph{Case $x = 0$:} Then $x \cdot (y+z) = 0$ and $(x \cdot y)+(x \cdot z) = 0+0 = 0$.
	
	\emph{Case $x \in A \setminus \{0\}$:} 
	\begin{itemize}
		\item If $y,z = 0$: both sides are $0$.
		\item If $y = 0$, $z \in A \setminus \{0\}$: LHS $= xz$, RHS $= 0 + xz = xz$.
		\item If $y = 0$, $z \in L$: LHS $= x \cdot z = z$, RHS $= 0 + z = z$.
		\item If $y \in A \setminus \{0\}$, $z = 0$: symmetric.
		\item If $y,z \in A \setminus \{0\}$: This case holds by distributivity in $A$.
		\item If $y \in A \setminus \{0\}$, $z \in L$: LHS $= x \cdot (y + z) = x \cdot z = z$, RHS $= xy + xz = xy + z = z$ (since $z \in L$ dominates).
		\item If $y \in L$, $z = 0$: symmetric.
		\item If $y \in L$, $z \in A \setminus \{0\}$: symmetric.
		\item If $y,z \in L$: LHS $= x \cdot (y \vee z) = y \vee z$, RHS $= y + z = y \vee z$.
	\end{itemize}
	
	\emph{Case $x \in L$:}
	\begin{itemize}
		\item If $y,z = 0$: both sides are $0$.
		\item If $y = 0$, $z \in A \setminus \{0\}$: LHS $= x \cdot z = x$, RHS $= x \cdot 0 + x \cdot z = 0 + x = x$.
		\item If $y = 0$, $z \in L$: LHS $= x \cdot z = x \vee z$, RHS $= 0 + (x \vee z) = x \vee z$.
		\item If $y \in A \setminus \{0\}$, $z = 0$: symmetric.
		\item If $y,z \in A \setminus \{0\}$: by zerosumfree property for $A$, $y+z \in A \setminus \{0\}$, so LHS $= x \cdot (y+z) = x$, RHS $= x + x = x$.
		\item If $y \in A \setminus \{0\}$, $z \in L$: LHS $= x \cdot z = x \vee z$, RHS $= x + (x \vee z) = x \vee z$.
		\item If $y \in L$, $z = 0$: symmetric.
		\item If $y \in L$, $z \in A \setminus \{0\}$: symmetric.
		\item If $y,z \in L$: LHS $= x \cdot (y \vee z) = x \vee y \vee z$, RHS $= (x \vee y) + (x \vee z) = x \vee y \vee z$.
	\end{itemize} Hence distributivity holds. Thus $(S,+,\cdot)$ is a semiring, completing the proof.
\end{proof}

\begin{remark}
In Definition~\ref{fusionsemiringlatticedef}, requiring $A$ to be an information algebra (that is, zerosumfree and entire) is essential in order for $S$ to satisfy the semiring axioms. If $A$ is not zerosumfree, choose $x \in L$ and $y, z \in A \setminus \{0\}$ with $y+z=0$. Then
	\[
	x \cdot (y+z) = x \cdot 0 = 0 \neq x = (x \cdot y)+(x \cdot z),
	\]
	violating distributivity. If $A$ is not entire, choose $x \in L$ and $y, z \in A \setminus \{0\}$ with $y \cdot z = 0$. Then
	\[
	(x \cdot y) \cdot z = x \cdot z = x \neq 0 = x \cdot 0 = x \cdot (y \cdot z),
	\]
	violating associativity of multiplication. Thus both zerosumfreeness and entireness of $A$ are necessary for $S$ to be a semiring.
\end{remark}

\begin{lemma}\label{fusionsemiringlatticeorder}
	The extended relation $\leq$ on $S = A \cup L$ in Definition \ref{fusionsemiringlatticedef} is a partial order and the least element of $S$ is $0$.
\end{lemma}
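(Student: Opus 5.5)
We verify the three partial-order axioms for the relation $\leq$ on $S = A \cup L$ directly from Definition \ref{fusionsemiringlatticedef}, splitting into cases according to whether the elements involved lie in $A$ or in $L$. Precisely, $x \leq y$ holds iff either $x,y \in A$ and $x \leq_A y$, or $x,y \in L$ and $x \leq_L y$, or $x \in A$ and $y \in L$. The pair $x \in L$, $y \in A$ is never related. Since $A$ and $L$ are disjoint, each pair of elements falls into exactly one of these four cases, so the relation is well defined.

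\emph{Reflexivity.} For $x \in S$, either $x \in A$ and $x \leq_A x$, or $x \in L$ and $x \leq_L x$.

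\emph{Antisymmetry.} Suppose $x \leq y$ and $y \leq x$. If $x$ and $y$ lie in different components, one of the two relations would have its left entry in $L$ and its right entry in $A$, which is impossible. Hence both lie in $A$ or both lie in $L$, and $x = y$ follows from antisymmetry of $\leq_A$ or of $\leq_L$.

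\emph{Transitivity.} Suppose $x \leq y \leq z$. We split according to the component of $y$.
\begin{itemize}
\item If $y \in A$, then $x \in A$, since nothing in $L$ lies below an element of $A$.
\begin{itemize}
\item If moreover $z \in A$, transitivity of $\leq_A$ gives $x \leq z$.
\item If $z \in L$, then $x \in A$ and $z \in L$ give $x \leq z$ by definition.
\end{itemize}
\item If $y \in L$, then $z \in L$ for the same reason.
\begin{itemize}
\item If $x \in L$, transitivity of $\leq_L$ gives $x \leq z$.
\item If $x \in A$, then $x \leq z$ by definition.
\end{itemize}
\end{itemize}
This exhausts all cases.

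\emph{Least element.} We show that $0 \leq x$ for every $x \in S$. If $x \in A$, then $0 \leq_A x$ because $A$ is positive, so $0$ is the least element of $(A,\leq_A)$. If $x \in L$, then $0 \leq x$ by the clause $a \leq \ell$ for all $a \in A$, $\ell \in L$.

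The argument is routine throughout. The only point requiring care is transitivity, and in particular the observation that the middle element $y$ forces one of $x$ or $z$ into the same component as itself. That observation is what keeps the case split down to the four cases above.
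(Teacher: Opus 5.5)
Your proof is correct and follows the same route as the paper: a componentwise check of reflexivity, antisymmetry, and transitivity, with the cross-component cases excluded because no element of $L$ lies below an element of $A$. The only differences are cosmetic. You split transitivity on the component of the middle element $y$ rather than on those of $x$ and $z$, and you spell out, using positivity of $A$, the least-element claim that the paper simply calls clear.
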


\begin{proof}
We verify the three axioms:
	
	\begin{itemize}
		\item The relation $\leq$ on $S$ is reflexive because $\leq_A$ and $\leq_L$ are both reflexive.
		
		\item For antisymmetry condition of $\leq$, suppose $x \leq y$ and $y \leq x$ for $x,y \in S$.
		\begin{itemize}
			\item If $x,y \in A$ or $x,y \in L$, then antisymmetry follows from the antisymmetry condition of $\leq_A$ or $\leq_L$.
			\item If $x \in A$ and $y \in L$, then $x \leq y$ holds by definition, but $y \leq x$ never holds. Hence this case cannot occur. Similarly, $x \in L$ and $y \in A$ cannot occur with both inequalities.
		\end{itemize}
		Thus $x = y$ in all possible cases.
		
		\item For transitivity, suppose $x \leq y$ and $y \leq z$ for $x,y,z \in S$.
		\begin{itemize}
			\item If all three are in $A$ or in $L$, transitivity follows from the transitivity of $\leq_A$ or $\leq_L$.
			\item If $x \in A$ and $z \in L$, then $x \leq z$ no matter $y \in A$ or $y \in L$ because every element in $A$ is smaller than every element in $L$.
			\item The other cases never happen because any element in $L$ cannot be smaller than or equal to any element in $A$.
		\end{itemize}
		Hence $x \leq z$ in all cases.
		\end{itemize} Therefore $\leq$ is a partial order on $S$. It is clear that $0 \le x$ for all $x \in S$.
\end{proof}

\begin{theorem}\label{fusionsemiringlatticeordered}
The structure $(S,+,\cdot,\leq)$ in Definition \ref{fusionsemiringlatticedef} is a positive information algebra.
\end{theorem}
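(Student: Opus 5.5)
Proposition \ref{fusionsemiringlatticeorderedinformationalgebra} already makes $S$ a commutative semiring, and Lemma \ref{fusionsemiringlatticeorder} makes $\leq$ a partial order with least element $0$. Three things remain: zerosumfreeness, entireness, and compatibility of $\leq$ with $+$ and $\cdot$.

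\emph{Zerosumfree and entire.} Suppose $x+y=0$. If $x,y\in A$, zerosumfreeness of $A$ gives $x=y=0$. Otherwise $x+y=\sup\{x,y\}$ with at least one argument in $L$, so this supremum lies in $L$ and cannot equal $0\in A$. Now suppose $xy=0$ with $x,y\neq 0$. If both lie in $A\setminus\{0\}$, entireness of $A$ gives a contradiction. Otherwise $xy=\sup\{x,y\}\in L$, again $\neq 0$. So $S$ is an information algebra.

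\emph{Compatibility with addition.} I will show that $x\le y$ implies $x+z\le y+z$ for all $x,y,z\in S$, by cases on where $x,y,z$ lie; since $\ell\le a$ never holds, $x\in L$ forces $y\in L$.
\begin{itemize}
\item If $x,y,z\in A$, this is the compatibility in $A$.
\item If $x\in A$, $y\in L$ and $z\in A$, then $x+z\in A$ and $y+z=y\in L$, so $x+z\le y+z$.
\item If exactly one of $x,y,z$ lies in $A$, both sums are suprema, and the inequality follows from monotonicity of $\sup$ in the poset $S$. For this I first note that any $a\in A$ and $\ell\in L$ have $\sup\{a,\ell\}=\ell$, and any $\ell_1,\ell_2\in L$ have $\sup\{\ell_1,\ell_2\}=\ell_1\vee\ell_2$.
\item If $z\in L$ and $x,y\in A$, then $x+z=y+z=z$, and the inequality holds with equality.
\end{itemize}

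\emph{Compatibility with multiplication.} The same case analysis applies to $\cdot$, with one extra case: $x=0$. Then $xz=0$ is the least element, so $xz\le yz$. If $z=0$, both products are $0$. In the remaining cases all of $x,y,z$ are nonzero, and the argument mirrors the additive one, using compatibility of multiplication in $A$. The only new point is the case $x,y,z\in A\setminus\{0\}$: there $xz,yz\in A$, and $xz\le yz$ holds in $A$.

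The main obstacle is the compatibility step. The other steps are immediate, but this one needs a careful but routine enumeration of cases, and in each case it must be checked that the relevant supremum exists and is computed correctly.
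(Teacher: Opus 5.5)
Your proposal is correct and follows essentially the same route as the paper: you check zerosumfreeness and entireness directly, where the paper phrases them as $0<x \Rightarrow 0<x+y$, which is equivalent, and you verify order compatibility by the same case split on where $x,y,z$ lie, only compressing the multiplicative cases by first handling zeros and then mirroring the additive ones. One small slip is that your additive enumeration, and hence its multiplicative mirror, omits the case $x,y,z\in L$; it is covered by your monotonicity-of-supremum remark, since $x\le y$ gives $x\vee z\le y\vee z$ in $L$.
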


\begin{proof}
We first verify the two compatibility conditions:
	
\begin{enumerate}
		\item For any $x,y,z \in S$ with $x \leq y$, we show $x+z \leq y+z$.
		\begin{itemize}
			\item If $x,y \in A$ and $z \in A$: follows from $A$ being an ordered semiring.
			\item If $x,y \in A$ and $z \in L$: then $x+z = z$ and $y+z = z$, so equality holds.
			\item If $x \in A$, $y \in L$ (so $x \leq y$ automatically), and $z \in A$: then $x+z \in A$, $y+z = y$ (since $y \in L$ dominates), and $x+z \leq y$ because every element in $A$ is below every element in $L$.
			\item If $x \in A$, $y \in L$, and $z \in L$: then $x+z = z$, $y+z = y \vee z$, and $z \leq y \vee z$.
			\item If $x,y \in L$: then $x \leq y$ implies $x \vee z \leq y \vee z$ for any $z \in L$; if $z \in A$, then $x+z = x$, $y+z = y$, and $x \leq y$.
		\end{itemize}
		
		\item For any $x,y,z \in S$ with $x \leq y$ and $z \geq 0$, we show $x \cdot z \leq y \cdot z$.
		\begin{itemize}
			\item If $z = 0$: both sides are $0$, so equality holds.
			\item If $z \in A \setminus \{0\}$: 
			\begin{itemize}
				\item If $x,y \in A$: then $x \cdot z \leq y \cdot z$ because $A$ is an ordered semiring.
				\item If $x,y \in L$: then $xz = x$ and $yz = y$, and $x \leq y$ implies $x \cdot z \leq y \cdot z$.
				\item If $x \in A$ and $y \in L$: then $x \leq y$ automatically. Here $xz \in A$ (since $x,z \in A$) and $yz = y \in L$. Since every element of $A$ is below every element of $L$, we have $xz \leq y$, i.e., $x \cdot z \leq y \cdot z$.
				\item If $x \in L$ and $y \in A$: this cannot happen because $x \leq y$ would imply an $L$-element is below an $A$-element, contradicting $A < L$.
			\end{itemize}
			\item If $z \in L$: then for any $w \in A\setminus\{0\}$, we have $w \cdot z = z$; for $w \in L$, we have $w \cdot z = w \vee z$; and $0 \cdot z = 0$. Thus:
			\begin{itemize}
				\item If $x = 0$, then $x \cdot z = 0$. Since $x \leq y$, we have $0 \leq y$, so $y$ could be $0$ or nonzero. If $y = 0$, then $y \cdot z = 0$ and $0 \leq 0$ holds. If $y \neq 0$, then either $y \in A$, and so $y \cdot z = z$ and $0 \leq z$ holds because $z \in L$; or $y \in L$, and so $y \cdot z = y \vee z$ which is obviously $\geq 0$. 
				\item Let $x \neq 0$. Since $x \leq y$ and $x \neq 0$, we must have $y \neq 0$ (otherwise $x \leq 0$ would force $x = 0$, a contradiction). 
				\begin{itemize}
					\item If $x,y \in A$, then $xz = z$ and $yz = z$, so equality holds.
					\item If $x \in A$ and $y \in L$, then $xz = z$ while $yz = y \vee z$, and the inequality $z \leq y \vee z$ holds clearly.
					\item The case $x \in L$ and $y \in A$ cannot happen because $x \leq y$ would imply an element in $L$ is below an element in $A$, a contradiction: The assumption $x \leq y$ for $x \in L$ and $y \in A$ would imply $x = y$, contradicting the disjointness of $A$ and $L$.
					\item If $x,y \in L$, then $xz = x \vee z$ and $yz = y \vee z$, and $x \leq y$ implies $x \vee z \leq y \vee z$ in the semilattice.
				\end{itemize}
			\end{itemize}
			Therefore in all cases, $x \cdot z \leq y \cdot z$.
		\end{itemize}
	\end{enumerate}
	
	Thus, $(S,+,\cdot,\leq)$ is a positive semiring. We then show $S$ is zerosumfree by showing that if $0 < x$, then $0 < x+y$ for any $y \geq 0$. Consider cases:
	\begin{itemize}
		\item $x,y \in A$: then $x+y \in A$ and $x+y > 0$ because $A$ is zerosumfree.
		\item $x \in A$, $y \in L$: then $x+y = y \in L$, and since every element of $A$ is smaller than every element of $L$, we have $y > 0$, so $x+y > 0$.
		\item $x \in L$, $y \in A$: then $x+y = x \in L$, and $x > 0$ by assumption.
		\item $x,y \in L$: then $x+y = x \vee y \in L$, hence $x+y > 0$.
	\end{itemize}
	Thus $0 < x+y$ in all cases. Similarly one may prove that $S$ is entire by showing that the conditions $0 < x$ and $0 < y$ imply $0 < xy$. Hence $S$ is a positive information algebra.
\end{proof}

\begin{definition}\label{overflowsemiring}
	We call the positive information algebra $(S,+,\cdot,\leq)$ obtained by extending a positive information algebra $A$ with a join-semilattice $L$ (see Theorem \ref{fusionsemiringlatticeordered}) the overflow semiring, denoted by $A \uplus_{\ord} L$.
\end{definition}

\begin{examples}\label{fusionrestrictedmaxplusalgebra}
To illustrate the construction in Definition~\ref{overflowsemiring}, we present the following examples.
	
\begin{enumerate}
		
\item In Zermelo--Fraenkel set theory with the Axiom of Choice, consider the positive information algebra $\mathbb{N}_0$ together with the join-semilattice $\mathcal{L}$ of all infinite cardinal numbers. The overflow semiring $\mathscr{C} = \mathbb{N}_0 \uplus_{\ord} \mathcal{L}$, equipped with cardinal addition and multiplication, forms a class-semiring satisfying the axioms of a positive information algebra and is totally ordered by the usual ordering of cardinals (see Chapter~4 of \cite{Monk1969}).
		
\item It is straightforward to verify that the restricted max-plus algebra $A=[-\infty,0]$ (with addition $\oplus=\max$ and multiplication $\odot=+$) is a positive information algebra. Let $L=\mathbb{N}$ denote the set of positive integers equipped with the usual order. In the overflow semiring
$S=A\uplus_{\ord}L$, addition is simply given everywhere by $\max$. Multiplication is defined by
		\[
		x\odot y=
		\begin{cases}
			x+y, & \text{if } x,y\in A\setminus\{-\infty\},\\[4pt]
			-\infty, & \text{if } x=-\infty \text{ or } y=-\infty,\\[4pt]
			\max\{x,y\}, & \text{otherwise}.
		\end{cases}
		\] The resulting algebraic structure exhibits a surprisingly hybrid behavior between tropical arithmetic and ordered overflow dynamics.
		
\item The set $A=[0,1]$, equipped with $\max$ as addition, $\min$ as multiplication, and the usual order on the real numbers, forms a positive information algebra. Extending this structure by the singleton join-semilattice $L=\{\ell\}$ yields the overflow semiring $S=A\uplus_{\ord}\{\ell\}$, whose operations are given by
		\[
		x\oplus y=
		\begin{cases}
			\max(x,y), & x,y\in[0,1],\\
			\ell, & \text{otherwise},
		\end{cases}
		\qquad
		x\odot y=
		\begin{cases}
			\min(x,y), & x,y\in[0,1]\setminus\{0\},\\
			0, & x=0 \text{ or } y=0,\\
			\ell, & \text{otherwise}.
		\end{cases}
		\] This construction admits a natural interpretation in resource allocation:
		
\begin{itemize}
\item $[0,1]$ represents normalized resource levels ($0$ meaning no resources and $1$ full capacity);
\item $\ell$ represents an overflow state beyond system capacity;
\item $\max$ models competitive aggregation by selecting the larger demand;
\item $\min$ models cooperative interaction through bottleneck behavior;
\item once the system becomes overflowed, it remains overflowed unless reset by multiplication with $0$.
\end{itemize} Thus, $S$ models a resource system in which exceeding capacity irreversibly triggers a failure state, while multiplication by $0$ acts as a complete reset mechanism.

\item The Viterbi semiring $([0,1], \max, \cdot, \leq)$ (see p. 3 in \cite{Golan2003}) is a positive information algebra widely used in dynamic programming. For a join-semilattice $(L,\leq_L)$, the overflow semiring 
\[
V = [0,1] \uplus_{\text{ord}} L
\]
is also a positive information algebra. It is additively idempotent since addition is the $\sup$ operation. This construction extends standard Viterbi optimization to scenarios with multi-tiered or infinite utility states by gluing $L$ above $[0,1]$ to represent degrees of non-standard certainty. For instance, if $L = \{+\infty\}$, then 
\[
V = [0,1] \cup \{+\infty\}
\]
where $+\infty$ acts as a ``Super-Pass'' token in path-finding. If a path possesses this token, no amount of scaling by fractional probabilities can degrade it, unless it hits a dead end ($0$).
		
\item The Boolean semiring $(\mathbb{B}=\{0,1\},+,\cdot, \leq)$ forms a positive information algebra. Extending this structure by a singleton join-semilattice $\{m\}$ yields the overflow semiring $S=\mathbb{B}\uplus_{\ord}\{m\}$. Structurally, the additive behavior of $S$ closely resembles Bochvar's internal three-valued logic, which introduces a third truth value $m$ (meaningless) governed by a contagion principle satisfying $\neg m=m$, and for which an implication $x\to y$ evaluates to $m$ whenever either operand equals $m$ \cite[p. 80]{Bergmann2008}. Interpreting semiring addition through the classical equivalence $a+b \equiv \neg a \to b$, one obtains an additive Cayley table isomorphic to the additive structure of $S$. A fundamental structural divergence nevertheless appears in the multiplicative behavior. Bochvar's logic enforces complete logical contagion, requiring $0\cdot m=m$, whereas the overflow semiring preserves the algebraic annihilator law $0\cdot m=0$. From a computational perspective, this models short-circuit evaluation and local fault isolation: once a controlling gate is closed ($0$), a downstream exceptional state ($m$) is never propagated or executed. By privileging algebraic annihilation over universal logical contagion, the semiring $S$ ensures that a masked or inactive computational channel cannot be corrupted by a latent runtime fault.
\end{enumerate}
\end{examples}

\begin{proposition}\label{subsemiringsfusionsemiringlattice}
	Let $S = A \uplus_{\operatorname{ord}} L$ be the overflow semiring. For any $s \in L$:
	\begin{enumerate}
		\item The set $S^{\leq s} = \{x \in S : x \leq s\}$ is a subsemiring of $S$.
		\item If $L$ is totally ordered, then $S^{< s} = \{x \in S : x < s\}$ is also a subsemiring of $S$.
	\end{enumerate}
\end{proposition}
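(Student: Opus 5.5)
To show that $S^{\leq s}$ and $S^{<s}$ are subsemirings, I will check that each contains $0$ and $1$ and is closed under $+$ and $\cdot$. The operations are already associative, commutative and distributive on $S$ by Proposition \ref{fusionsemiringlatticeorderedinformationalgebra}, so nothing else is needed. Containment of $0$ and $1$ is immediate: both lie in $A$, and every element of $A$ lies strictly below every element of $L$ by Definition \ref{fusionsemiringlatticedef}. Hence $0,1 < s$, and so $0,1 \in S^{<s} \subseteq S^{\leq s}$.

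The key observation I will use is that, by the definition of the operations, the result of $x+y$ and of $x\cdot y$ always falls into one of three cases: it is computed in $A$, it equals $0$, or it equals $\sup\{x,y\}$ computed in $S$. When at least one argument lies in $L$, this supremum is either the $L$-argument (if the other lies in $A$) or $x\vee y$ in $L$. It then suffices to check the following.

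\emph{Results in $A$.} Whenever $x+y$ or $x\cdot y$ lands in $A$ (including the value $0$), it is strictly below $s$, because $s\in L$. So these results lie in both sets automatically.

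\emph{Results equal to $\sup\{x,y\}$, part (1).} If $x,y\le s$, then $s$ is an upper bound of $\{x,y\}$, hence $\sup\{x,y\}\le s$. This proves (1).

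\emph{Results equal to $\sup\{x,y\}$, part (2).} Here $x,y<s$, and the only remaining case is $\sup\{x,y\}=x\vee y$ with at least one argument in $L$. This is the one step that needs care, since in a general join-semilattice two elements strictly below $s$ can have join equal to $s$; this is exactly where total order is used. If both $x,y\in L$, then by totality $x\vee y=\max\{x,y\}$, which is $<s$. If only one argument lies in $L$, the supremum is that argument, which is $<s$ by hypothesis.

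I will also remark that without total order, (2) can fail. For instance, in the four-element lattice $\{a,b,a\vee b\}$ plus a bottom, taking $s=a\vee b$ gives $a+b=s\notin S^{<s}$. This explains why the hypothesis is needed.
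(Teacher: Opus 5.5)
Your proposal is correct and follows essentially the same route as the paper: $0,1\in A$ lie strictly below $s$, any result landing in $A$ is automatically below $s$, and any result of the form $\sup\{x,y\}$ is $\le s$ in general and $<s$ when $L$ is a chain. You are slightly more careful than the paper, which glosses over the case $0\cdot\ell=0$ and does not say why $\sup\{x,y\}<s$ under totality; your counterexample is the same phenomenon as the paper's Example~\ref{necessity-total-order-strict}, where the extra bottom element you add is unnecessary but harmless.
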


\begin{proof}
	(1) $0,1 \in S^{\leq s}$ because $A < L$. If $x,y \in S^{\leq s}$, then either $x,y \in A$ and $x+y, xy \in A < s$, or at least one lies in $L$, in which case both operations reduce to $\sup\{x,y\} \leq s$. Hence closure holds.
	
	(2) Under total order, if $x,y < s$ then $\sup\{x,y\} < s$; the same case analysis as above gives closure for addition and multiplication, and $0,1 < s$ because $A < L$. Thus $S^{<s}$ is a subsemiring.
\end{proof}

\begin{example}\label{necessity-total-order-strict}
	The total order assumption in Proposition \ref{subsemiringsfusionsemiringlattice}(2) is essential. Let $A = \mathbb{N}_0$ and let $L = \{\ell_1, \ell_2, s\}$ be the join-semilattice with $\ell_1 < s$, $\ell_2 < s$, and $\ell_1 \vee \ell_2 = s$ (so $L$ is not totally ordered). In $S = A \uplus_{\operatorname{ord}} L$, we have $\ell_1, \ell_2 \in S^{<s}$, but
	\[
	\ell_1 + \ell_2 = \sup\{\ell_1, \ell_2\} = \ell_1 \vee \ell_2 = s \notin S^{<s}.
	\]
	Thus $S^{<s}$ is not closed under addition and hence is not a subsemiring.
\end{example}

\begin{corollary}[Directed Filtration by $L$]\label{DirectedFiltration}
Let $S = A \uplus_{\ord} L$ be the overflow semiring. The family of subsemirings $\{S^{\leq s}\}_{s \in L}$ forms an increasing directed filtration of $S$, such that
	\[
	S = \bigcup_{s \in L} S^{\leq s}.
	\]
\end{corollary}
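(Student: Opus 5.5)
Three things have to be checked for the family $\{S^{\leq s}\}_{s \in L}$: each member is a subsemiring, the family is increasing and directed, and its union is $S$. The first is exactly Proposition \ref{subsemiringsfusionsemiringlattice}(1), so I will cite it and not repeat the closure computations.

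For monotonicity, I will show that $s \leq_L t$ in $L$ implies $S^{\leq s} \subseteq S^{\leq t}$. If $x \leq s$, then $x \leq t$ by transitivity of the order on $S$, which holds by Lemma \ref{fusionsemiringlatticeorder}. Directedness then comes from the join-semilattice structure of $L$. Given $s,t \in L$, their join $s \vee t$ lies in $L$ and dominates both, so by monotonicity
\[
S^{\leq s} \cup S^{\leq t} \subseteq S^{\leq s \vee t}.
\]
Hence any two members of the family lie in a common member. It is worth recording that $\sup\{s,t\}$ computed in $S$ agrees with $s \vee t$ in $L$, because no element of $A$ lies above an element of $L$. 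The family is therefore a directed system of subsemirings, and its union is automatically closed under the operations, consistent with it being all of $S$.

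For the union, I will show $S \subseteq \bigcup_{s\in L} S^{\leq s}$ (the reverse inclusion is trivial) by splitting on where $x \in S$ lies. If $x \in L$, then $x \in S^{\leq x}$ by reflexivity. If $x \in A$, I pick any $s \in L$; then $x \leq s$ by the definition of the order, since $A$ lies below $L$, so $x \in S^{\leq s}$.

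The only real obstacle is that this last case needs $L \neq \emptyset$; if $L$ were empty, the family would be empty and its union could not be $S$. I will state this explicitly as an assumption implicit in the construction, namely that $L$ is a nonempty join-semilattice, since the overflow construction is only meaningful when $L$ is nonempty. Everything else is routine bookkeeping with the order on $S$.
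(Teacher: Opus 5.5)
Your proposal is correct and follows essentially the same route as the paper: monotonicity of $s\mapsto S^{\leq s}$ by transitivity, and the union via the case split $x\in L$ (reflexivity) versus $x\in A$ (every element of $A$ lies below every element of $L$). You go slightly further than the paper in two ways: you check directedness explicitly through $S^{\leq s}\cup S^{\leq t}\subseteq S^{\leq s\vee t}$, which the paper leaves implicit, and you flag that the union identity requires $L\neq\emptyset$, an assumption the paper's proof uses tacitly and which genuinely matters, since the paper elsewhere allows $L=\emptyset$.
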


\begin{proof}
Let $s, t \in L$ with $s \leq t$. If $x \in S^{\leq s}$, then $x \leq s \leq t$ implies $x \in S^{\leq t}$. Thus, $S^{\leq s} \subseteq S^{\leq t}$, establishing that the family is tracking the poset order. Now, let $x \in S$. If $x \in A$, then $x \leq s$ for all $s \in L$, placing $x \in \bigcup_{s \in L} S^{\leq s}$. If $x \in L$, then by reflexivity $x \leq x$, placing $x \in S^{\leq x}$. Hence, the identity $S = \bigcup_{s \in L} S^{\leq s}$ holds, completing the proof.
\end{proof}

\section{Distinguished elements of the overflow semirings}\label{sec:distinguishedelementsoverflowsemirings}

\begin{proposition}\label{fusionsemiringlatticeidempotent}
Let $I^+(S)$ and $I^\times(S)$ denote the additively and multiplicatively idempotent elements of $S$, respectively. Then for the overflow semiring $S = A \uplus_{\ord} L$,
	\[
	I^+(S) = I^+(A) \cup L \qquad \text{and} \qquad I^\times(S) = I^\times(A) \cup L.
	\]
\end{proposition}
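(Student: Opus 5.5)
The plan is to prove each equality by double inclusion, splitting according to whether an element $x \in S$ lies in $A$ or in $L$. Since $A$ and $L$ partition $S$, this suffices. For $x \in A$ one computes $x+x$ and $x\cdot x$ inside $A$, using Definition \ref{fusionsemiringlatticedef}. For $x \in L$ one computes them as suprema in $L$.

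\emph{Additive idempotents.} First suppose $x \in A$. Then $x+x = x +_A x$ by the first case of the addition rule, so $x+x=x$ in $S$ if and only if $x \in I^+(A)$. Next suppose $x \in L$. Then $x + x = \sup\{x,x\} = x$, so every element of $L$ is additively idempotent. Together these give $I^+(S) = I^+(A) \cup L$.

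\emph{Multiplicative idempotents.} For $x \in L$, we have $x \neq 0$ (because $0 \in A$ and $A \cap L = \emptyset$). Hence the third case of the multiplication rule applies and $x\cdot x = \sup\{x,x\} = x$, so $L \subseteq I^\times(S)$. For $x \in A$ there are two subcases:
\begin{itemize}
\item If $x = 0$, then $x\cdot x = 0 = x$. Also $0 \in I^\times(A)$ because $0$ is absorbing in $A$.
\item If $x \in A\setminus\{0\}$, then $x\cdot x = x\cdot_A x$. So $x^2=x$ in $S$ if and only if $x \in I^\times(A)$.
\end{itemize}
Hence $I^\times(S)\cap A = I^\times(A)$, and therefore $I^\times(S) = I^\times(A)\cup L$.

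There is no real obstacle here. The only point needing care is the split between the zero and nonzero cases of the multiplication rule for elements of $A$, so that the case $x=0$ is not sent to the "otherwise" branch. Everything else is a direct evaluation of the defining formulas.
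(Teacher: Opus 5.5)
Your proof is correct and follows essentially the same route as the paper: you split according to whether $x$ lies in $A$ or in $L$, every element of $L$ is idempotent because both operations give $\sup\{x,x\}=x$, and an element of $A$ is idempotent in $S$ exactly when it is idempotent in $A$ because the operations restrict to those of $A$. Your separate treatment of $x=0$ in the multiplicative case is a small piece of extra care that the paper leaves implicit.
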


\begin{proof}
If $x \in L$, then $x+x = \sup\{x,x\} = x$ and $x \cdot x = \sup\{x,x\} = x$, so $L \subseteq I^+(S), \, I^\times(S)$. If $x \in A$, additive or multiplicative idempotency of $x$ in $S$ follows from that in $A$. Conversely, no element outside $I^+(A) \cup L$ can be additively idempotent, and similarly for multiplication. Hence the equalities hold.
\end{proof}

An element $a$ of a semiring $S$ is called multiplicatively invertible (or a unit) if $ab = 1$ for some $b \in S$. The group of all units of $S$ is denoted by $U(S)$.

\begin{proposition}\label{fusionsemiringlatticeunits}
For the overflow semiring $S = A \uplus_{\operatorname{ord}} L$, the group of units satisfies $U(S) = U(A)$.
\end{proposition}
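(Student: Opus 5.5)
We prove the two inclusions $U(A)\subseteq U(S)$ and $U(S)\subseteq U(A)$ separately; the only real content lies in showing that no element of $L$ can be a unit.

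\emph{$U(A)\subseteq U(S)$.} Let $a\in U(A)$, so $ab=1$ for some $b\in A$. Since $A$ is a semiring with $1\neq 0$, neither $a$ nor $b$ is zero. Hence, by Definition \ref{fusionsemiringlatticedef}, the product of $a$ and $b$ in $S$ equals $a\cdot_A b=1$. The identity of $S$ is $1\in A$ by Lemma \ref{fusionsemiringlatticemultiplicationmonoid}, so $a\in U(S)$.

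\emph{$U(S)\subseteq U(A)$.} Let $x\in U(S)$ with $xy=1$ for some $y\in S$. We split into cases according to where $x$ and $y$ lie.
\begin{enumerate}
\item If $x=0$ or $y=0$, then $xy=0\neq 1$, which is impossible.
\item If $x,y\in A\setminus\{0\}$, then $xy=x\cdot_A y=1$, so $x\in U(A)$.
\item In every remaining case at least one of $x,y$ lies in $L$ and neither is $0$. Then $xy=\sup\{x,y\}$. This supremum is in $L$: if both factors lie in $L$ it is their join in $L$, and if only one does it is that element, because $A<L$. But $1\in A$ and $A\cap L=\emptyset$, so $xy\neq 1$, again a contradiction.
\end{enumerate}
Thus every unit of $S$ lies in $A$ and is already invertible there, with an inverse in $A$. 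Hence $U(S)=U(A)$ as sets.

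Both groups use the same multiplication $\cdot_A$ on nonzero elements of $A$, so the equality also holds as groups. The only step needing care is case (3), which uses that $A$ and $L$ are disjoint and that a supremum involving an element of $L$ stays in $L$.
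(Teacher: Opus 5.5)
Your proof is correct and follows essentially the same route as the paper. Both arguments rest on two observations: any product involving an element of $L$ lies in $L\cup\{0\}$ and so cannot equal $1$, and multiplication on $A$ is unchanged inside $S$. Your explicit case split on the locations of $x$ and $y$ simply spells out what the paper states more tersely.
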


\begin{proof}
If $\ell \in L$, then $\ell y \in L \cup \{0\}$ for every $y \in S$, so $\ell y \neq 1$. Hence no element of $L$ is a unit, and therefore $U(S)\subseteq A$. Now multiplication on $A$ is unchanged in $S$. Thus every $a\in U(A)$ remains invertible in $S$, so $U(A)\subseteq U(S)$. Conversely, if $a\in A$ is invertible in $S$, its inverse must lie in $A$ since elements of $L$ are not units. Hence $a\in U(A)$. Hence, $U(S)=U(A)$ as claimed.
\end{proof}

Recall that a multiplicative monoid $M$ is called conical if $ab = 1$ implies $a = b = 1$ for all $a,b \in M$ \cite[p. xvii]{Cohn2006}. We extend this notion to semirings as follows.

\begin{definition}\label{conicalsemiringdef}
We call	a semiring $S$ conical if $ab = 1$ implies $a = b = 1$ for all $a,b \in S$.
\end{definition}

\begin{proposition}
	If $A$ is conical, then so is the overflow semiring $S = A \uplus_{\ord} L$ and the polynomial semiring $S[X]$.
\end{proposition}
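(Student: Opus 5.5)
For the overflow semiring, suppose $ab = 1$ in $S = A \uplus_{\ord} L$. I will first locate $a$ and $b$. By the proof of Proposition \ref{fusionsemiringlatticeunits}, if either factor lies in $L$ then $ab \in L \cup \{0\}$, so $ab \neq 1$. Hence $a,b \in A$, and neither is $0$ because $0$ is absorbing. Multiplication of nonzero elements of $A$ is unchanged in $S$, so $ab = 1$ holds in $A$. Conicality of $A$ then gives $a = b = 1$, and so $S$ is conical. Equivalently, this step is the statement $U(S) = U(A) = \{1\}$.

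For $S[X]$, suppose $f g = 1$ with $f = \sum_i a_i X^i$ and $g = \sum_j b_j X^j$. The plan is to use the two properties of $S$ established in Theorem \ref{fusionsemiringlatticeordered}: $S$ is zerosumfree and entire. I will first show that $f$ and $g$ are constants. Let $m = \deg f$ and $n = \deg g$, so $a_m \neq 0$ and $b_n \neq 0$. The top coefficient of $fg$ is $a_m b_n$, which is nonzero because $S$ is entire. The comparison $fg = 1$ then forces $m + n = 0$. This avoids any cancellation, since the coefficient of $X^{m+n}$ is the single product $a_m b_n$.

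To be safe, I can also argue via zerosumfreeness. For $k \geq 1$, the coefficient $\sum_{i+j=k} a_i b_j$ equals $0$, so each product $a_i b_j$ with $i + j = k$ vanishes. Entireness then forces every nonconstant coefficient of $f$ or $g$ to vanish, given that $a_0 b_0 = 1$ makes $a_0, b_0 \neq 0$. Once $f = a_0$ and $g = b_0$ are constants with $a_0 b_0 = 1$ in $S$, the first part gives $a_0 = b_0 = 1$.

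The only delicate point is the degree argument, which in general semirings fails without entireness. Here Theorem \ref{fusionsemiringlatticeordered} supplies entireness, so I expect no real obstacle. The main care needed is in the case analysis showing that products involving $L$ never equal $1$.
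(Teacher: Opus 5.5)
Your proposal is correct and follows essentially the same route as the paper: a product involving an element of $L$ lies in $L\cup\{0\}$, so it cannot equal $1$, which reduces the first part to conicality of $A$; then entireness gives $\deg f+\deg g=\deg(fg)=0$, so $f$ and $g$ are constants and the first part applies. Your additional zerosumfree argument (each $a_0b_k=a_kb_0=0$ for $k\ge 1$) is a valid but unnecessary alternative to the degree count.
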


\begin{proof}
	Suppose $x \cdot y = 1$ in $S$. Since $1 \in A$ and any multiplication involving an element of $L$ lies in $L$ (or is $0$ if the other factor is $0$), we must have $x, y \in A$ and $x \cdot y = 1$. Conicality of $A$ then yields $x = y = 1$. Now let $f, g \in S[X]$ with $fg = 1$. Since $S[X]$ is entire, \[\deg(f) + \deg(g) = \deg(fg) = \deg(1) = 0,\] so $\deg(f) = \deg(g) = 0$. Thus $f$ and $g$ are constant polynomials, i.e., $f = a$, $g = b$ for some $a, b \in S$. Then $ab = 1$ in $S$, so by the first part $a = b = 1$. Hence $f = g = 1$, proving $S[X]$ is conical.
\end{proof}

\begin{example}\label{Cconical}
	The following are examples of conical positive information algebras:
	
	\begin{itemize}
		\item The class-semiring of cardinal numbers $\mathscr{C}$ and the semiring $S$ obtained from the restricted max-plus algebra in Examples \ref{fusionrestrictedmaxplusalgebra} are both conical. However, they are not isomorphic because $S$ is additively idempotent ($\max(x,x) = x$), whereas $\mathscr{C}$ is not (since $\kappa + \kappa = \kappa$ holds for infinite cardinals but fails for finite nonzero cardinals).
		
		\item Every bounded distributive lattice $(L,\vee,\wedge,0,1)$ is a conical positive semiring; if it is totally ordered (a chain), it is also a conical information algebra.
		
		\item The Viterbi positive information algebra $([0,1], \max, \cdot, \leq)$ illustrated in Examples \ref{fusionrestrictedmaxplusalgebra} is conical.
	\end{itemize}
	
\end{example}

\begin{definition}\label{smallelementsdef}
An element $a$ of a semiring $S$ is called small if \[b \in S \setminus U(S) \implies a+b \in S \setminus U(S).\] The set of all small elements of $S$ is denoted by $\sml(S)$.
\end{definition}

\begin{theorem}\label{fusionsemiringlatticesmallelements}
Let $S=A\uplus_{\ord}L$ be the overflow semiring. Then \[\sml(S) = \sml(A)\cup L.\]
\end{theorem}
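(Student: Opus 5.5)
The plan is to prove the two inclusions directly, using Proposition \ref{fusionsemiringlatticeunits}, which gives $U(S)=U(A)$. It follows that $S\setminus U(S) = (A\setminus U(A))\cup L$. Everything then reduces to checking where sums land, and the addition table of Definition \ref{fusionsemiringlatticedef} handles this.

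\emph{Step 1: $L\subseteq \sml(S)$.} Let $\ell\in L$ and $b\in S\setminus U(S)$. Since $\ell\in L$, the sum is $\ell+b=\sup\{\ell,b\}\in L$ whether $b$ lies in $A$ or in $L$. Elements of $L$ are never units (Proposition \ref{fusionsemiringlatticeunits}), so $\ell+b\notin U(S)$.

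\emph{Step 2: $\sml(A)\subseteq \sml(S)$.} Let $a\in\sml(A)$ and $b\in S\setminus U(S)$. If $b\in L$, then $a+b=b\in L$, which is not a unit. If $b\in A$, then $b\in A\setminus U(A)$, and smallness in $A$ gives $a+b=a+_Ab\notin U(A)=U(S)$.

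\emph{Step 3: $\sml(S)\subseteq\sml(A)\cup L$.} Take $x\in\sml(S)$. If $x\in L$ we are done, so assume $x\in A$. For any $b\in A\setminus U(A)$ we have $b\in S\setminus U(S)$, so $x+b\notin U(S)$. The sum $x+b$ is computed in $A$, so this says $x+b\notin U(A)$. Hence $x\in\sml(A)$.

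I do not expect any step to be a real obstacle. The only care needed is in Step 3: one must use that the sum of two elements of $A$ computed in $S$ agrees with the sum in $A$, and that $U(S)\cap A=U(A)$. Both facts are immediate from the definition and from Proposition \ref{fusionsemiringlatticeunits}.
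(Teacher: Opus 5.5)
Your proposal is correct and follows essentially the same route as the paper: both use $U(S)=U(A)$ to prove $L\subseteq\sml(S)$, then $\sml(A)\subseteq\sml(S)$ by splitting on $b\in A\setminus U(A)$ versus $b\in L$. The reverse inclusion is obtained, as in the paper, by restricting to $b\in A\setminus U(A)$ and noting that sums of elements of $A$ are computed in $A$.
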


\begin{proof}
First let $\ell\in L$ and let $b \notin U(S)$. Since $L$ is absorbing under the overflow addition, $\ell+b\in L$, and hence $\ell+b\notin U(S)$. Thus $\ell\in \sml(S)$, so $L\subseteq \sml(S)$. Now let $a\in \sml(A)$ and let $b\notin U(S)$. Since $U(S)=U(A)$ (Proposition \ref{fusionsemiringlatticeunits}), either $b\in A \setminus U(A)$ or $b\in L$.

\begin{itemize}
	\item If $b\in A \setminus U(A)$, then $a+b\in A \setminus U(A)$ because $a\in \sml(A)$, and so, $a+b \notin U(S)$.
	
	\item If $b\in L$, then $a+b\in L$, so again $a+b\notin U(S)$.
\end{itemize} Thus $a\in \sml(S)$, proving $\sml(A)\cup L \subseteq \sml(S)$. 

Conversely, let $a\in \sml(S)$ and assume $a\notin L$. Then $a\in A$. We show that $a\in \sml(A)$. Let $b\in A\setminus U(A)$. Since $U(A)=U(S)$, we have $b\notin U(S)$. Because $a\in \sml(S)$,
\[
a+b \notin U(S).
\]
Moreover, since $a,b\in A$, the operations of $S$ restrict to those of $A$, hence the sum $a+b$ computed in $S$ lies in $A$ and coincides with the sum in $A$. Therefore,
\[
a+b \in A \setminus U(A),
\]
so $a\in \sml(A)$. Thus $\sml(S)\subseteq \sml(A)\cup L$.
\end{proof}

\begin{definition}
An element $x$ in a semiring is called additively (resp., multiplicatively) (von Neumann) regular if there exists an element $y$ such that $x + y + x = x$ (resp., $x \cdot y \cdot x = x$). We denote the sets of these additively and multiplicatively regular elements by $\operatorname{VNR}^+(S)$ and $\operatorname{VNR}^\times(S)$, respectively.
\end{definition}

\begin{proposition}\label{fusionsemiringlatticevnr}
	For the overflow semiring $S = A \uplus_{\ord} L$, the sets of additively and multiplicatively regular elements satisfy
	\[
	\operatorname{VNR}^+(S)
	=
	\operatorname{VNR}^+(A)\cup L,
	\qquad
	\operatorname{VNR}^\times(S)
	=
	\operatorname{VNR}^\times(A)\cup L.
	\]
\end{proposition}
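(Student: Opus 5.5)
We prove each of the two equalities by double inclusion, following the pattern of Proposition \ref{fusionsemiringlatticeidempotent}. The strategy is to split on whether the element $x$ lies in $L$ or in $A$. In the case $x\in A$, we locate where the witness $y$ must live and reduce to the definitions in $A$.

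\emph{Elements of $L$ are regular.} Let $x\in L$ and take $y=x$. Then
\[
x+y+x=\sup\{x,x,x\}=x
\qquad\text{and}\qquad
x\cdot y\cdot x=\sup\{x,x,x\}=x.
\]
Alternatively, $y=0$ works additively, since $x+0+x=x+x=x$, and $y=1$ works multiplicatively, since $x\cdot 1\cdot x=x\cdot x=x$. Hence $L\subseteq \operatorname{VNR}^+(S)\cap\operatorname{VNR}^\times(S)$.

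\emph{Regular elements of $A$ stay regular.} Let $x\in\operatorname{VNR}^+(A)$ (resp.\ $\operatorname{VNR}^\times(A)$) with witness $y\in A$. The operations of $S$ restrict to those of $A$ on $A$. For multiplication this needs a remark: when a factor is $0$ the product in $S$ is $0$, which agrees with $A$. So the same $y$ witnesses regularity of $x$ in $S$.

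\emph{Converse.} Let $x\in A$ be regular in $S$ with witness $y\in S$.

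For the additive case, suppose $y\in L$. Then $x+y+x=\sup$ of the terms, which lies in $L$ and so cannot equal $x\in A$. Hence $y\in A$, the computation takes place in $A$, and $x\in\operatorname{VNR}^+(A)$.

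For the multiplicative case:
\begin{itemize}
\item If $x=0$, then $0\in\operatorname{VNR}^\times(A)$ trivially, with witness $0$.
\item If $x\neq 0$ and $y\in L$, then $x\cdot y\cdot x$ is a product of nonzero factors with one factor in $L$. By the multiplication rule it lies in $L$, so it is not $x$.
\item If $y=0$, then $x\cdot y\cdot x=0\neq x$.
\end{itemize}
Therefore $y\in A\setminus\{0\}$, and $xyx=x$ holds in $A$, so $x\in\operatorname{VNR}^\times(A)$.

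The only delicate point is this last step. We must be sure that a product in $S$ involving an element of $L$ and no zero factor lands in $L$. This holds because the product is a $\sup$ containing an $L$-element. Associativity from Lemma \ref{fusionsemiringlatticemultiplicationmonoid} makes the triple product unambiguous. Finally, the case $x=0$ must be handled separately, since the multiplication rule treats $0$ specially.
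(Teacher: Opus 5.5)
Your proposal is correct and follows essentially the same route as the paper. You show that $L$ consists of regular elements via the witness $y=x$, note that regularity equations in $A$ persist in $S$, and, for $x\in A$, rule out a witness $y\in L$ because the sum or product then collapses to $y\in L$; your separate treatment of $y=0$ and the alternative witnesses $y=0$ and $y=1$ are harmless refinements, not a different argument.
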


\begin{proof}
	For every $x\in L$, taking $y=x$ gives
	\[
	x+x+x=x, \qquad x\cdot x\cdot x=x,
	\]
	since both operations on $L$ are the join. Hence
	\[
	L\subseteq \operatorname{VNR}^+(S)\cap \operatorname{VNR}^\times(S).
	\]
	If $x\in \operatorname{VNR}^+(A)$ or $x\in \operatorname{VNR}^\times(A)$, the corresponding regularity equation in $A$ remains valid in $S$. Thus
	\[
	\operatorname{VNR}^+(A)\cup L \subseteq \operatorname{VNR}^+(S),
	\qquad
	\operatorname{VNR}^\times(A)\cup L \subseteq \operatorname{VNR}^\times(S).
	\]
	Conversely, let $x\in A$. For the multiplicative case, if $x=0$ then $x\in \operatorname{VNR}^\times(A)$; otherwise, if $x\cdot y\cdot x=x$ with $y\in L$, then
	\[
	x\cdot y\cdot x = \sup\{x,y\}=y,
	\]
	forcing $y=x$, a contradiction. Hence $y\in A$, so $x\in \operatorname{VNR}^\times(A)$. Similarly, if $x+y+x=x$ with $y\in L$, then
	\[
	x+y+x=\sup\{x,y\}=y,
	\]
	again a contradiction; hence $y\in A$ and $x\in \operatorname{VNR}^+(A)$. Therefore
	\[
	\operatorname{VNR}^+(S)=\operatorname{VNR}^+(A)\cup L,
	\qquad
	\operatorname{VNR}^\times(S)=\operatorname{VNR}^\times(A)\cup L,
	\] completing the proof.
\end{proof}

\begin{proposition}\label{idempotentpolynomialsoverflow}
Let $S = A \uplus_{\ord} L$ be the overflow semiring. Then the multiplicatively idempotent elements of $S[X]$ are the constant polynomials in $I^\times(A) \cup L$.
\end{proposition}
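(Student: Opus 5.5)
The plan has two directions. The easy inclusion: constants from $I^\times(A)\cup L$ are idempotent in $S[X]$ by Proposition \ref{fusionsemiringlatticeidempotent}, since multiplication of constant polynomials is just multiplication in $S$. The real content is the converse: every idempotent $f\in S[X]$ is constant.

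Let $f=\sum_{i=0}^n a_iX^i$ with $f^2=f$. If $f=0$, then $f$ is the constant $0\in I^\times(A)$, so assume $f\neq 0$ and $a_n\neq 0$. The key input is that $S$ is an information algebra by Theorem \ref{fusionsemiringlatticeordered}, hence entire and zerosumfree. The leading coefficient of $f^2$ is $a_n^2$, which is nonzero because $S$ is entire. Being zerosumfree ensures that no cancellation can kill the top term: the $X^{2n}$ coefficient of $f^2$ is exactly $a_n^2$, with no other contributions. Thus $\deg(f^2)=2n$, exactly as in the degree argument already used for conicality, where $S[X]$ was noted to be entire. From $f^2=f$ we get $2n=n$, so $n=0$ and $f=a_0$ is constant. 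Then $a_0^2=a_0$ in $S$, and Proposition \ref{fusionsemiringlatticeidempotent} gives $a_0\in I^\times(A)\cup L$.

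There is little genuine obstacle here. The only point to state carefully is that the degree of a product of nonzero polynomials is additive. This uses only that $a_n b_m\neq 0$, which is entireness of $S$, since the top coefficient of a product is a single product term and needs no zerosumfreeness. I would spell this out in one line rather than merely cite that $S[X]$ is entire, because the earlier proposition asserted that fact without proof.
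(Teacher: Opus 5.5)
Your proof is correct and is essentially the paper's argument: because $S$ is entire, the $X^{2n}$ coefficient of $f^2$ is $a_n^2\neq 0$, so $2\deg(f)=\deg(f)$ forces $f$ to be constant, and Proposition \ref{fusionsemiringlatticeidempotent} then identifies the constant idempotents. Your closing remark is the accurate one: the top coefficient is a single product term, so zerosumfreeness plays no role and your earlier sentence invoking it should be dropped; treating $f=0$ separately is slightly more careful than the paper's bare degree equation.
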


\begin{proof}
If $f^2 = f$, then $2\deg(f) = \deg(f)$ because $S$ is entire. Hence $\deg(f) = 0$, so $f$ is a constant polynomial, say $f = c \in S$. Then $c^2 = c$, i.e., $c \in I^\times(A) \cup L$ by Proposition \ref{fusionsemiringlatticeidempotent}. Conversely, any constant polynomial $c$ with $c \in I^\times(A) \cup L$ satisfies $c^2 = c$.
\end{proof}

\begin{proposition}\label{idempotentformalpowerseriesconstantzero}
Let $S$ be any semiring. An element $f = \sum_{n=0}^{\infty} a_n X^n$ with $a_0 = 0$ is multiplicatively idempotent in the formal power series semiring $S[[X]]$ if and only if $f = 0$.
\end{proposition}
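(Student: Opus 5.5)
The approach is an order-of-vanishing argument: compare the lowest nonzero coefficient of $f$ with the lowest possible nonzero coefficient of $f^2$. The ``if'' direction is immediate, since $0\cdot 0=0$ in $S[[X]]$. For the converse, suppose $f^2=f$ with $a_0=0$ and, for contradiction, $f\neq 0$. Let $m\geq 1$ be the least index with $a_m\neq 0$; it exists because $f\neq 0$ and $a_0=0$.

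The key step is to compute the coefficients of $f^2$ below degree $2m$. The coefficient of $X^k$ in $f^2$ is
\[
\sum_{i+j=k} a_i a_j .
\]
If $k<2m$, then every pair $(i,j)$ with $i+j=k$ has $i<m$ or $j<m$. Hence $a_i=0$ or $a_j=0$, so each product vanishes because $0$ is multiplicatively absorbing in any semiring. The whole sum is then $0$ because $0$ is the additive identity.

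Since $m\geq 1$, we have $m<2m$, so the coefficient of $X^m$ in $f^2$ is $0$. Comparing coefficients of $X^m$ in $f^2=f$ gives $a_m=0$, contradicting the choice of $m$. Hence $f=0$.

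I expect no real obstacle here. The argument uses only that $0$ is absorbing and additively neutral, which holds in every semiring, and it needs neither entireness nor zerosumfreeness. The one point to check is that $m\geq 1$ is exactly what forces $m<2m$; this is where the hypothesis $a_0=0$ enters, and it is why the statement can fail when $a_0\neq 0$.
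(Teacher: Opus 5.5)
Your proof is correct and is essentially the paper's argument. The paper runs a strong induction on $n$ using $a_n = a_0a_n + a_na_0 + \sum_{k=1}^{n-1} a_k a_{n-k}$, while you phrase the same step as a least-counterexample argument at the first nonzero index $m$, and both rely only on $0$ being absorbing and additively neutral.
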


\begin{proof}
If $f = 0$, then clearly $f^2 = f$. Conversely, suppose $f^2 = f$ and $a_0 = 0$. Then for all $n \ge 0$,
	\[
	a_n = \sum_{k=0}^n a_k a_{n-k}.
	\] For $n = 0$, we have $a_0 = a_0^2 = 0$, which holds. Assume inductively that $a_1 = \cdots = a_{n-1} = 0$. Then for $n \ge 1$,
	\[
	a_n = a_0 a_n + a_n a_0 + \sum_{k=1}^{n-1} a_k a_{n-k} = 0.
	\] Thus by induction, $a_n = 0$ for all $n \ge 1$, so $f = 0$.
\end{proof}

\begin{theorem}\label{idempotentformalpowerseriesovercardinals1}
Let $(A,+,\cdot,\leq_A)$ be an additively cancellative positive information algebra and let $(L,\leq_L)$ be a chain. Let $S = A \uplus_{\mathrm{ord}} L$ be the overflow semiring. Consider a formal power series $f = \sum_{n=0}^{\infty} a_n X^n \in S[[X]]$ with $a_0 = 1$. If $f \neq 1$, then $f^2 = f$ if and only if:
	\begin{enumerate}
		\item every non-zero coefficient $a_n$ with $n \ge 1$ lies in $L$,
		\item the set $\{\, n \ge 1 \mid a_n \neq 0 \,\}$ is infinite,
		\item for every $n \ge 1$, $a_n \ge \sup_{1 \le i \le n-1} a_i a_{n-i}$.
	\end{enumerate}
\end{theorem}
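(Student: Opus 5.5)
The plan is to compare coefficients in $f^2=f$ directly. Since $a_0=1$, for every $n\ge 1$ the $n$-th coefficient of $f^2$ is
\[
a_0a_n+a_na_0+\sum_{i=1}^{n-1}a_ia_{n-i}=a_n+a_n+c_n,\qquad c_n:=\sum_{i=1}^{n-1}a_ia_{n-i},
\]
with $c_1=0$. The constant coefficient gives $1=1$. So $f^2=f$ is equivalent to
\[
a_n=a_n+a_n+c_n \quad\text{for all } n\ge 1.
\]
Everything reduces to analysing this identity using the rules of Definition \ref{fusionsemiringlatticedef}.

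\emph{First, a bookkeeping observation.} Suppose all nonzero $a_i$ with $i\ge1$ lie in $L$. Then each product $a_ia_{n-i}$ is either $0$ or a join in $L$. Because $0$ is the least element and addition involving $L$ is the supremum, the sum $c_n$ equals $\sup_{1\le i\le n-1}a_ia_{n-i}$, with the empty supremum equal to $0$. Since $L$ is a chain, this supremum is a genuine element of $S$. This is the step I expect to need the most care: condition (3) is phrased with $\sup$, so I have to justify carefully that the semiring sum coincides with the order supremum.

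\emph{Forward direction.} Assume $f^2=f$.

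For (1), suppose $0\ne a_n\in A$ for some $n\ge 1$. If $c_n\in L$, then $a_n+a_n+c_n\in L$, which cannot equal $a_n$. So $c_n\in A$, and the identity becomes $a_n+0=a_n+(a_n+c_n)$ in $A$. Additive cancellativity gives $a_n+c_n=0$, and zerosumfreeness gives $a_n=0$, a contradiction.

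With (1) established, the bookkeeping observation applies. For (3), if $a_n\in L$, the identity reads $a_n=a_n\vee c_n$ (or $a_n=a_n$ when $c_n=0$), hence $a_n\ge c_n$. If $a_n=0$, the identity forces $c_n=0$, so again $a_n\ge c_n$.

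For (2), argue by contradiction. Since $f\ne1$, some $a_m\ne0$ with $m\ge1$. If only finitely many coefficients were nonzero, let $N\ge1$ be the largest index with $a_N\ne0$. Then $a_{2N}=0$, but $c_{2N}\ge a_Na_N=a_N\ne 0$, contradicting (3) at $n=2N$.

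\emph{Converse.} Assume (1)–(3); condition (2) is not actually needed here. Fix $n\ge1$.
\begin{itemize}
\item If $a_n=0$, then by (3) and the bookkeeping observation $c_n\le 0$, so $c_n=0$ and the identity holds.
\item If $a_n\in L$, then $a_n+a_n=a_n$. Moreover $c_n$ is either $0$ or an element of $L$ below $a_n$, so $a_n+c_n=a_n$, and the identity holds.
\end{itemize}
Thus $f^2=f$.
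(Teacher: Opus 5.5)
Your proof is correct and follows the paper's argument essentially step for step. You compare coefficients to get $a_n=a_n+a_n+c_n$, use cancellation and zerosumfreeness in $A$ to force the nonzero coefficients into $L$, read (3) off the join, and use the doubling $a_{2m}\ge a_ma_m=a_m$ for (2); the differences are cosmetic (a contradiction via the largest nonzero index $N$ and $n=2N$ instead of the paper's induction on $a_{km}$, and your correct observation that (2) is redundant in the converse given $f\neq 1$).
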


\begin{proof}
Assume $f^2 = f$ and $f \neq 1$. Equating the coefficients of $X^n$ for $n \ge 1$ yields the convolution identity \[a_n = a_n + a_n + \sum_{1 \le i \le n-1} a_i a_{n-i}.\] Suppose $a_n \in A$ for some $n \ge 1$. Since $A < L$ in $S$, a sum in $S$ evaluates to an element of $A$ if and only if all its constituent summands belong to $A$. Thus, $a_i a_{n-i} \in A$ for all $i$, meaning the identity holds completely within the additively cancellative base semiring $A$. Canceling $a_n$ from both sides gives $0 = a_n + \sum_{1 \le i \le n-1} a_i a_{n-i}$. Because $A$ is zerosumfree, this forces $a_n = 0$, which established condition (1).

With condition (1) verified, any addition involving non-zero coefficients for $n \ge 1$ is governed entirely by the supremum operation defining the overflow structure of $S$. The convolution equation can thus be rewritten using the lattice join as $a_n = a_n \lor \sup_{1 \le i \le n-1} a_i a_{n-i}$, which directly implies $a_n \ge \sup_{1 \le i \le n-1} a_i a_{n-i}$, proving condition (3).
	
For condition (2), since $f \neq 1$, there exists some $m \ge 1$ such that $a_m \in L \setminus \{0\}$. By condition (3) and the multiplicative idempotency of $L$, it follows that $a_{2m} \ge a_m a_m = a_m$. By induction, $a_{km} \neq 0$ for all $k \ge 1$, which guarantees that the set of non-zero indices is infinite.
	
\medskip \noindent Conversely assume conditions (1)--(3) hold. We check the convolution step for $n \ge 1$. If $a_n = 0$, condition (3) forces $a_i a_{n-i} = 0$ for all $1 \le i \le n-1$, reducing the identity to $0 = 0$. If $a_n \neq 0$, then $a_n \in L$ by condition (1). For each $1 \le i \le n-1$, if either factor is zero, the product vanishes; if both are non-zero, they belong to $L$, yielding $a_i a_{n-i} = \sup\{a_i, a_{n-i}\} \in L$. Because $L$ is a chain, this supremum is a well-defined maximum dominated by $a_n$ via condition (3). It follows that $a_n \lor \sup_{1 \le i \le n-1} a_i a_{n-i} = a_n$. Thus, the convolution equality holds coefficientwise, meaning $f^2 = f$, while condition (2) ensures $f \neq 1$.
\end{proof}

\begin{theorem}\label{idempotentformalpowerseriesovercardinals2}
Let $S = A \uplus_{\mathrm{ord}} L$ be an overflow semiring where $L$ is a chain. Consider a formal power series $f = \sum_{n=0}^{\infty} a_n X^n \in S[[X]]$ with $a_0 = \lambda \in L$. Then $f^2 = f$ if and only if for every $n \ge 1$:
	\begin{enumerate}
		\item $a_n \in \{0\} \cup \{x \in L \mid x \ge \lambda\}$,
		\item $a_n \ge \sup_{1 \le i \le n-1} a_i a_{n-i}$ whenever $a_n \neq 0$,
		\item $\sup_{1 \le i \le n-1} a_i a_{n-i} = 0$ whenever $a_n = 0$.
	\end{enumerate}
\end{theorem}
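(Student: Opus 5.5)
The plan is to compare coefficients of $X^n$ in $f^2=f$, which for $n\ge 1$ reads
\[
a_n = \lambda a_n + a_n\lambda + \sum_{1\le i\le n-1} a_i a_{n-i} = \lambda a_n + \sum_{1\le i\le n-1} a_i a_{n-i},
\]
using that $\lambda a_n + \lambda a_n = \lambda a_n$. This holds because $\lambda a_n\in L\cup\{0\}$ and $L$ is additively idempotent by Proposition \ref{fusionsemiringlatticeidempotent}. The constant term gives $\lambda^2=\lambda$, which is automatic. The first task is to record how $\lambda a_n$ behaves in each case. If $a_n=0$ then $\lambda a_n=0$. If $a_n\in A\setminus\{0\}$ then $\lambda a_n=\lambda$. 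If $a_n\in L$ then $\lambda a_n=\lambda\vee a_n$, which equals $\max\{\lambda,a_n\}$ since $L$ is a chain.

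For the forward direction, assume $f^2=f$ and fix $n\ge 1$. If $a_n\neq 0$, then $\lambda a_n\in L$, so the right-hand side lies in $L$. Hence $a_n\in L$, and moreover $a_n\ge \lambda a_n\ge\lambda$, which gives (1). In this case the whole sum is a join in $L$, so $a_n = \lambda\vee a_n\vee\sup_i a_ia_{n-i}$, and this yields (2). If $a_n=0$, then $0=\sum_i a_ia_{n-i}$, and zerosumfreeness of $S$ (Theorem \ref{fusionsemiringlatticeordered}) forces every product to vanish, which is (3). Condition (1) also covers the case $a_n=0$.

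For the converse, assume (1)--(3) and verify the identity coefficientwise. When $a_n=0$, condition (3) makes every product $a_ia_{n-i}$ zero, and $\lambda\cdot 0=0$, so both sides are $0$. When $a_n\neq0$, condition (1) gives $a_n\in L$ with $a_n\ge\lambda$, hence $\lambda a_n=a_n$. Each product $a_ia_{n-i}$ is either $0$ or a join of elements of $L$. The sum therefore equals $a_n\vee\sup_i a_ia_{n-i}$, with the zero terms dropping out because $0$ is the additive identity. By (2) this join is $a_n$.

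I expect the main obstacle to be bookkeeping rather than anything conceptual. The delicate points are justifying that a sum containing an $L$-summand collapses to the join of its $L$-summands, ignoring $A$-terms (this follows from the definition of $+$ and Lemma \ref{fusionsemiringlatticeadditionmonoid}), and reading the supremum in (2)--(3) as a supremum taken in $S$. I would state the collapse as a one-line observation first and then use it in both directions.
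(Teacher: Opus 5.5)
Your proposal is correct and follows essentially the paper's own argument. You compare the $X^n$ coefficients, rule out $a_n\in A\setminus\{0\}$ because $\lambda a_n=\lambda$ forces the right-hand side into $L$, read off (1) and (2) from the join identity $a_n=\lambda\vee a_n\vee\sup_i a_ia_{n-i}$, obtain (3) from zerosumfreeness, and verify the converse coefficientwise; the only cosmetic difference is that you merge $\lambda a_n+a_n\lambda$ into $\lambda a_n$ by additive idempotence first, which the paper leaves implicit in the join.
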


\begin{proof}
Assume $f^2 = f$ with $a_0 = \lambda \in L$. For $n \ge 1$, the convolution yields \[a_n = \lambda a_n + a_n \lambda + \sum_{1 \le i \le n-1} a_i a_{n-i}.\] If $a_n \in A \setminus \{0\}$, then $\lambda a_n = \sup\{\lambda, a_n\} = \lambda \in L$. This introduces an element of $L$ to the right-hand side, forcing the entire sum into $L$, which contradicts $a_n \in A$. Thus, any non-zero coefficient $a_n$ must lie in $L$. For any $a_n \in L$, we have \[\lambda a_n = \sup\{\lambda, a_n\} = \lambda \lor a_n.\] Since addition involving $L$ is the supremum operation, the convolution identity reduces to $a_n = \lambda \lor a_n \lor \sup_{1 \le i \le n-1} a_i a_{n-i}$. This immediately implies $a_n \ge \lambda$ and $a_n \ge \sup_{1 \le i \le n-1} a_i a_{n-i}$, establishing conditions (1) and (2). If $a_n = 0$, then $\lambda a_n = 0$, reducing the identity to $0 = \sum_{1 \le i \le n-1} a_i a_{n-i}$. Since $S$ is zerosumfree, this requires each intermediate summand to vanish, proving condition (3).
	
\medskip\noindent Conversely, if conditions (1)--(3) hold, we evaluate the convolution for $n \ge 1$. If $a_n = 0$, condition (3) ensures the intermediate terms vanish, reducing the identity to $0 = 0$. If $a_n \neq 0$, then $a_n \in L$ and $a_n \ge \lambda$ by condition (1), which implies $\lambda a_n + a_n \lambda = a_n \lor a_n = a_n$. By condition (2), $a_n$ dominates the supremum of the intermediate products, meaning the entire right-hand side simplifies via the lattice join to $a_n \lor \sup_{1 \le i \le n-1} a_i a_{n-i} = a_n$. Thus, $f^2 = f$ holds coefficientwise.
\end{proof}

\begin{corollary}\label{idempotentsformalpowerseriescardinalnumbers}
	Let $\mathscr{C}$ be the semiring of cardinal numbers. A formal power series $f = \sum_{n=0}^{\infty} a_n X^n \in \mathscr{C}[[X]]$ is multiplicatively idempotent if and only if its constant term $a_0$ is an idempotent cardinal--namely $a_0 = 0$, $a_0 = 1$, or an infinite cardinal $\lambda$--and except the case $f=1$, its higher coefficients satisfy the conditions of Proposition~\ref{idempotentformalpowerseriesconstantzero}, Theorem~\ref{idempotentformalpowerseriesovercardinals1}, or Theorem~\ref{idempotentformalpowerseriesovercardinals2}, respectively.
\end{corollary}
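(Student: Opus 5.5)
The plan is to split according to the constant term $a_0$ and then invoke the three results already proved. The key observation is that the constant coefficient of $f^2$ is $a_0^2$, so $f^2=f$ forces $a_0^2=a_0$, i.e. $a_0\in I^\times(\mathscr{C})$. I will realize $\mathscr{C}$ as the overflow semiring $\mathbb{N}_0\uplus_{\ord}\mathcal{L}$ of Examples \ref{fusionrestrictedmaxplusalgebra}(1). Proposition \ref{fusionsemiringlatticeidempotent} then gives $I^\times(\mathscr{C})=I^\times(\mathbb{N}_0)\cup\mathcal{L}$. Since $I^\times(\mathbb{N}_0)=\{0,1\}$ (from $n^2=n$ in $\mathbb{N}_0$), the possible constant terms are exactly $0$, $1$, or an infinite cardinal $\lambda$. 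Conversely, each of these values is idempotent, so the constant-term equation imposes no further restriction.

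I would then treat the three cases in turn.
\begin{enumerate}
\item If $a_0=0$, Proposition \ref{idempotentformalpowerseriesconstantzero} applies verbatim, since it holds over any semiring: $f$ is idempotent iff $f=0$.
\item If $a_0=1$ and $f=1$, then $f$ is trivially idempotent; this is the excluded case in the statement. If $a_0=1$ and $f\neq 1$, I apply Theorem \ref{idempotentformalpowerseriesovercardinals1} with $A=\mathbb{N}_0$ and $L=\mathcal{L}$.
\item If $a_0=\lambda$ is infinite, Theorem \ref{idempotentformalpowerseriesovercardinals2} applies directly.
\end{enumerate}
Combining the three cases gives the biconditional. In each case the forward direction comes from the constant-term analysis plus the "only if" part of the cited result, and the backward direction comes from its "if" part.

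The main obstacle is checking the hypotheses of the two theorems for $\mathscr{C}$. Four things need verification:
\begin{enumerate}
\item $\mathbb{N}_0$ is an additively cancellative positive information algebra. This is immediate.
\item The infinite cardinals form a chain under ZFC. This holds because cardinals are totally ordered, so $\mathcal{L}$ is a chain and in particular a join-semilattice.
\item Cardinal arithmetic coincides with the overflow operations. For infinite $\kappa$ and $\mu$, we have $\kappa+\mu=\kappa\mu=\max\{\kappa,\mu\}$. For an infinite $\kappa$ and a nonzero finite $n$, we have $\kappa+n=\kappa n=\kappa$. Finally, $0\cdot\kappa=0$. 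These are exactly the rules of Definition \ref{fusionsemiringlatticedef}.
\item Class-sized issues do not matter. The arguments of the cited theorems only manipulate countably many coefficients and use the semiring axioms, so the same arguments apply to the class-semiring $\mathscr{C}$. I would add a brief remark to this effect rather than a new argument.
\end{enumerate}
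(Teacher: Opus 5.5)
Your proposal is correct and follows essentially the same route as the paper: the constant-term equation $a_0^2=a_0$, together with $I^\times(\mathscr{C})=I^\times(\mathbb{N}_0)\cup\mathcal{L}=\{0,1\}\cup\mathcal{L}$, splits the problem into three cases, handled respectively by Proposition~\ref{idempotentformalpowerseriesconstantzero}, Theorem~\ref{idempotentformalpowerseriesovercardinals1}, and Theorem~\ref{idempotentformalpowerseriesovercardinals2}. Your explicit checks (that cardinal arithmetic realizes the overflow operations, that $\mathbb{N}_0$ is additively cancellative, that the infinite cardinals form a chain, and that the class size of $\mathscr{C}$ is harmless) spell out what the paper asserts in a single sentence.
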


\begin{proof}
	Equating the constant terms of $f^2 = f$ yields $a_0^2 = a_0$. In $\mathscr{C}$, the scalar idempotents are precisely $0$, $1$, and all infinite cardinals. Since $\mathscr{C}$ naturally partitions into the overflow structure $\mathbb{N}_0 \uplus_{\mathrm{ord}} L$, where $\mathbb{N}_0$ is an additively cancellative, zerosumfree semiring and $L$ is the chain of infinite cardinals, the classification follows by exhausting these three algebraic regimes. If $a_0 = 0$, Proposition~\ref{idempotentformalpowerseriesconstantzero} forces $f = 0$. If $a_0 = 1$, the series is governed by the conditions of Theorem~\ref{idempotentformalpowerseriesovercardinals1}. Finally, if $a_0 = \lambda \in L$, the conditions reduce to those of Theorem~\ref{idempotentformalpowerseriesovercardinals2}. Necessity and sufficiency in each case flow directly from the corresponding results.
\end{proof}

Recall that an element $a$ of a semiring $A$ is called multiplicatively subidempotent if $a+a^2 = a$. A semiring is called multiplicatively subidempotent if each of its elements is multiplicatively subidempotent \cite[p.~8]{Golan2003}.

\begin{proposition}\label{fusionsemiringlatticetimessubidempotent}
Let $A$ be a multiplicatively subidempotent positive information algebra. Then the overflow semiring $S = A \uplus_{\ord} L$ is also multiplicatively subidempotent.
\end{proposition}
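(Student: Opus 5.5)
We must show $x + x^2 = x$ for every $x \in S = A \uplus_{\ord} L$. Since $S$ is the disjoint union of $A$ and $L$, and $A$ is closed under both operations of $S$ (the operations of $S$ restricted to $A$ coincide with those of $A$, as used in the proof of Theorem~\ref{fusionsemiringlatticesmallelements}), the argument splits into two cases according to where $x$ lies.

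\emph{Case $x \in A$.} Here $x^2$ computed in $S$ lies in $A$ and equals $x \cdot_A x$. The sum $x + x^2$ of two elements of $A$ is then $x +_A x^2$. This equals $x$ by the hypothesis that $A$ is multiplicatively subidempotent. For $x = 0$ the computation is trivial, since $0 + 0\cdot 0 = 0$, so no separate care is needed.

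\emph{Case $x \in L$.} By Proposition~\ref{fusionsemiringlatticeidempotent}, $L \subseteq I^\times(S)$, so $x^2 = \sup\{x,x\} = x$. Likewise $L \subseteq I^+(S)$, so $x + x^2 = x + x = \sup\{x,x\} = x$.

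Combining the two cases gives $x + x^2 = x$ for all $x \in S$, so $S$ is multiplicatively subidempotent. There is no genuine obstacle here. The only point needing care is to justify explicitly that sums and products of elements of $A$ computed in $S$ stay in $A$ and agree with those of $A$, which is immediate from Definition~\ref{fusionsemiringlatticedef}, including the zero case of multiplication. No positivity or order argument is required beyond what Theorem~\ref{fusionsemiringlatticeordered} already guarantees.
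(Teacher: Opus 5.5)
Your proof is correct and matches the paper's argument: you split on whether $x\in A$, where the hypothesis on $A$ applies because the operations of $S$ restrict to those of $A$, or $x\in L$, where both operations are the join and so $x+x^2=x$. Your explicit check that $A$ is closed in $S$ with the same operations (including the zero case of multiplication) is a step the paper leaves implicit.
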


\begin{proof}
If $s \in A$, then $s$ is multiplicatively subidempotent by hypothesis. If $s = \ell \in L$, then multiplication in $L$ is join, so
	\[
	\ell + \ell^2 = \sup\{\ell, \sup\{\ell,\ell\}\} = \ell.
	\]
	Hence every element of $S$ is multiplicatively subidempotent, as claimed.
\end{proof}

\begin{remark}
Golan describes that the Viterbi semiring $V$ (see Example~\ref{fusionrestrictedmaxplusalgebra}) is not only additively idempotent and multiplicatively subidempotent, but also satisfies the following condition:
	\begin{align}\label{Viterbirarecondition}
		a+b \geq ab, \qquad \forall\, a,b \in V = [0,1].
	\end{align}
Note that in the Viterbi semiring, addition is the max operation.  
\end{remark}

\begin{proposition}
Let $A$ be a positive information algebra satisfying the condition~\eqref{Viterbirarecondition}. Then the overflow semiring $S = A \uplus_{\ord} L$ satisfies the same condition.
\end{proposition}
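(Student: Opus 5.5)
The plan is a direct case analysis on where $a$ and $b$ lie in $S = A \cup L$, using only the explicit formulas for $+$ and $\cdot$ from Definition~\ref{fusionsemiringlatticedef} and the order from Lemma~\ref{fusionsemiringlatticeorder}. We must show $a+b \geq ab$ for all $a,b \in S$.

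\emph{Case $a=0$ or $b=0$.} Then $ab = 0$, and $a+b \geq 0$ holds because $0$ is the least element of $S$ (Lemma~\ref{fusionsemiringlatticeorder}).

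\emph{Case $a,b \in A\setminus\{0\}$.} Both operations are computed in $A$, so $a+b = a+_A b$ and $ab = a\cdot_A b$. The inequality is then exactly the hypothesis~\eqref{Viterbirarecondition} on $A$, and the order on $S$ restricts to $\leq_A$.

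\emph{Case at least one of $a,b$ lies in $L$ and neither is $0$.} By definition $a+b = \sup\{a,b\}$ and $ab = \sup\{a,b\}$, so equality holds. The only point to check is that $\sup\{a,b\}$ exists in $S$ whenever one argument is in $L$. If both lie in $L$, it is their join in $L$. If, say, $a \in A$ and $b \in L$, then $b$ is the supremum because $A < L$.

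This exhausts all cases, so $S$ satisfies~\eqref{Viterbirarecondition}. I expect no step to be a real obstacle. The only subtlety is the mixed case $a \in A\setminus\{0\}$, $b\in L$: there both sides equal $b$, so the inequality holds with equality.
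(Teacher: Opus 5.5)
Your proof is correct and follows essentially the same case analysis as the paper. In both arguments the case of two factors in $A$ uses the hypothesis on $A$ directly, a nonzero factor paired with an element of $L$ gives $a+b=ab=\sup\{a,b\}$, and a zero factor gives $ab=0$, which lies below everything; the only difference is that you dispose of all zero cases at once, whereas the paper treats $a=0$, $b\in L$ separately and absorbs the other zero cases into the $A$-case.
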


\begin{proof}
Let $a,b \in S$. If $a,b \in A$, then $a+b = a+_A b$ and $ab = a\cdot_A b$, so $a+b \ge ab$ by hypothesis. If $a \neq 0$ and $b \in L$, then both operations give $\sup\{a,b\}$, so $a+b = ab$. If $a = 0$ and $b \in L$, then $a+b = \sup\{0,b\} = b$ and $ab = 0$. Because $0 \le b$ in $S$ (every element of $A$ is $\le$ every element of $L$), we have $b \ge 0$, i.e., $a+b \ge ab$. The cases with $a \in L$, $b \in A$ are symmetric. Thus $a+b \ge ab$ holds for all $a,b \in S$.
\end{proof}

\section{Ideal theory of the overflow semiring}\label{sec:idealtheoryoverflowsemiring}

Recall that a semiring $S$ is called austere if its only subtractive ideals are $\{0\}$ and $S$ \cite[p. 71]{Golan1999}.

\begin{proposition}\label{fusionsemiringlatticeaustere}
The overflow semiring $S = A \uplus_{\ord} L$ is austere.
\end{proposition}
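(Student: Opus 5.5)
Let $I$ be a subtractive ideal of $S$ with $I\neq\{0\}$. The plan is to show $1\in I$, which gives $I=S$. The argument splits according to whether $I$ contains an element of $L$ or only elements of $A$.

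\emph{Step 1: $I$ contains an element of $L$.} Pick a nonzero $x\in I$ and any $\ell\in L$ (we use $L\neq\emptyset$, implicit in the construction). Since $I$ is an ideal, $\ell x\in I$. If $x\in A\setminus\{0\}$, then $\ell x=\sup\{\ell,x\}=\ell$. If $x\in L$, then $\ell x=\ell\vee x\in L$. In either case $I$ contains some $m\in L$.

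\emph{Step 2: subtractivity forces $1\in I$.} Take $m\in I\cap L$. In $S$ we have $1+m=\sup\{1,m\}=m$, because every element of $A$ lies below every element of $L$. Thus $1+m=m\in I$ and $m\in I$. By subtractivity (with $x=m$, $y=1$ in the definition), $1\in I$. Hence $I\supseteq 1\cdot S=S$, so $I=S$.

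Both steps reduce to the defining rules $a\cdot\ell=\ell$ for $a\in A\setminus\{0\}$ and $a+\ell=\ell$. The only point needing care is Step 1: it must produce an element of $L$ inside $I$ even when $I$ starts with only elements of $A$. Multiplying by an element of $L$ does this. The same absorption argument shows that every $y\in S$ satisfies $y+m'\in I$ for a suitable $m'\in I\cap L$ (take $m'=m\vee y$ if $y\in L$), which gives $I=S$ directly; passing through $1$ is cleaner. Finally, $\{0\}$ is itself subtractive, since $S$ is zerosumfree (Theorem \ref{fusionsemiringlatticeordered}), and $S$ is trivially subtractive. So the only subtractive ideals are $\{0\}$ and $S$, and $S$ is austere.
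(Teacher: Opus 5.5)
Your proof is correct and is essentially the paper's argument: the paper likewise multiplies a nonzero element of $I$ by an element of $L$ to get $L$-elements into $I$ and then uses $1+\ell=\ell$ together with subtractivity. The only difference is that the paper phrases this contrapositively (a nonzero proper ideal cannot be subtractive), and your explicit remarks that $L\neq\emptyset$ is needed and that $\{0\}$ is subtractive spell out points the paper leaves implicit; note that $\{0\}$ is subtractive in every semiring, so zerosumfreeness is not needed there.
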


\begin{proof}
Let $I$ be a nonzero proper ideal of $S$. Pick $x \in I \setminus \{0\}$. If $x \in A$, then for any $\ell \in L$,
	\[
	x\ell = \sup\{x,\ell\} = \ell \in I,
	\]
	so $L \subseteq I$. Since $1 + \ell = \ell \in I$ and $1 \notin I$, it follows that $I$ is not subtractive. If $x \in L$, then $1 + x = x \in I$ with $1 \notin I$, again $I$ is not subtractive. Hence, the only subtractive ideals are $\{0\}$ and $S$.
\end{proof}

\begin{proposition}\label{fusionsemiringlatticeextcontractionideals}
	Let $S = A \uplus_{\ord} L$ be the overflow semiring. Then:
	
	\begin{itemize}
		\item If $I \trianglelefteq A$, then $I \cup L$ is an ideal of $S$, and the map
		\[
		I \mapsto I \cup L
		\]
		is injective from $\mathrm{Id}(A)$ into $\mathrm{Id}(S)$.
		
		\item If $J \trianglelefteq S$, then $J \cap A$ is an ideal of $A$.
		
		\item The contraction map $J \mapsto J \cap A$ is injective on the set of ideals of $S$ containing $L$.
	\end{itemize}
\end{proposition}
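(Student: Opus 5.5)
The plan is to verify each of the three claims directly from Definition \ref{fusionsemiringlatticedef}, using only how $+$ and $\cdot$ act once an element of $L$ appears.

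\emph{First claim.} Let $I \trianglelefteq A$. I will check that $I \cup L$ is nonempty and closed under addition and under multiplication by elements of $S$.
\begin{itemize}
\item Nonemptiness is immediate, since $0 \in I$.
\item For addition, take $x,y \in I \cup L$. If $x,y \in I$, then $x+y$ is computed in $A$ and lies in $I$. Otherwise at least one summand lies in $L$, so $x+y = \sup\{x,y\}$. Because every element of $A$ lies below every element of $L$, this supremum is again in $L$: it equals the $L$-summand, or the join of the two $L$-summands.
\item For multiplication, take $s \in S$ and $x \in I \cup L$.
  \begin{itemize}
  \item If $s=0$ or $x=0$, then $sx = 0 \in I$.
  \item If $s,x \in A\setminus\{0\}$, then $sx$ is computed in $A$ and lies in $I$, since $I$ is an ideal of $A$.
  \item In every remaining case $sx = \sup\{s,x\}$ with at least one of $s,x$ in $L$, so $sx \in L$.
  \end{itemize}
\end{itemize}
Injectivity of $I \mapsto I\cup L$ follows from disjointness of $A$ and $L$: $(I\cup L)\cap A = I$, so $I$ is recovered from $I\cup L$.

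\emph{Second claim.} Let $J \trianglelefteq S$. I will show $J\cap A$ is an ideal of $A$.
\begin{itemize}
\item $0 \in J\cap A$, because $0 = 0\cdot x \in J$ for any $x \in J$.
\item If $a,b \in J\cap A$, then $a+b \in J$, and $a+b$ is computed in $A$, so $a+b \in A$.
\item If $r \in A$ and $a \in J\cap A$, then $ra \in J$. Also $ra \in A$: it is either $0$ or the $A$-product of two nonzero elements of $A$.
\end{itemize}

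\emph{Third claim.} Let $J_1, J_2$ be ideals of $S$ containing $L$ with $J_1\cap A = J_2\cap A$. Each $J_k$ decomposes as
\[
J_k = (J_k\cap A)\cup (J_k\cap L) = (J_k\cap A)\cup L,
\]
so $J_1 = J_2$. Combined with the first claim, this shows that ideals of $S$ containing $L$ correspond exactly to ideals of $A$.

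None of these steps is a real obstacle. The only point needing care is the case split for products involving $0$ in the first claim: $0\cdot\ell = 0$ rather than $\ell$, and this product must still land in $I\cup L$. It does, because $0 \in I$.
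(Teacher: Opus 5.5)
Your proposal is correct and follows the same route as the paper. The paper checks the same three points: closure of $I\cup L$, the identity $(I\cup L)\cap A=I$, the contraction $J\cap A$ being an ideal, and $J_i=(J_i\cap A)\cup L$. Your version simply spells out the case analysis, including products with $0$ and nonemptiness, that the paper leaves implicit.
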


\begin{proof}
	Let $I \trianglelefteq A$. Then $I \cup L$ is closed under addition and absorbs multiplication by elements of $S$, hence is an ideal of $S$. Injectivity of $I \mapsto I \cup L$ follows from $(I \cup L) \cap A = I$.
	
	Let $J \trianglelefteq S$. If $a,b \in J \cap A$, then $a+b \in J \cap A$, and if $a \in J \cap A$ and $x \in A$, then $ax \in J \cap A$, so $J \cap A \trianglelefteq A$.
	
	If $J_1, J_2 \trianglelefteq S$ with $L \subseteq J_1, J_2$ and $J_1 \cap A = J_2 \cap A$, then $J_i = (J_i \cap A) \cup L$, hence $J_1 = J_2$.
\end{proof}

\begin{theorem}\label{fusionsemiringlatticeidealstructure}
	Let $S = A \uplus_{\ord} L$ be the overflow semiring with $L$ well-ordered, and let $J$ be a nonzero ideal of $S$.
	\begin{enumerate}
		\item[Type I:] If $J$ contains a nonzero element of $A$, then $J = J_A \cup L$ where $J_A = J \cap A \trianglelefteq A$. Moreover, $J$ is prime if and only if $J_A$ is prime in $A$.
		
		\item[Type II:] If $J$ contains no nonzero element of $A$, then $J = \{0\} \cup \{\ell \in L : \ell \ge \ell_0\}$ where $\ell_0 = \min(J \cap L)$, and $J$ is prime.
	\end{enumerate}
\end{theorem}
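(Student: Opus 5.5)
The plan is to exploit two facts: a nonzero element of $A$ acts as a multiplicative identity on $L$, and $L$ multiplies by the join. Since a well-ordered set is in particular a chain, joins in $L$ are maxima throughout.

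\emph{Type I.} Suppose $0\neq a\in J\cap A$. For every $\ell\in L$, Definition \ref{fusionsemiringlatticedef} gives $a\ell=\sup\{a,\ell\}=\ell$, so $L\subseteq J$. Since $S=A\cup L$ is a disjoint union, this yields $J=(J\cap A)\cup L=J_A\cup L$. By Proposition \ref{fusionsemiringlatticeextcontractionideals}, $J_A\trianglelefteq A$.

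For primality, first note that $J$ is proper if and only if $1\notin J$, if and only if $1\notin J_A$, if and only if $J_A$ is proper.
\begin{itemize}
\item If $J$ is prime and $a,b\in A$ satisfy $ab\in J_A$, then $ab\in J$. Hence $a$ or $b$ lies in $J\cap A=J_A$, so $J_A$ is prime.
\item Conversely, assume $J_A$ is prime and $xy\in J$. If $x$ or $y$ lies in $L$, that factor is already in $J$. Otherwise $x,y\in A$, so $xy\in J\cap A=J_A$, and primeness of $J_A$ puts $x$ or $y$ in $J_A\subseteq J$.
\end{itemize}

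\emph{Type II.} Suppose $J\cap A=\{0\}$. Since $J\neq\{0\}$, the set $J\cap L$ is nonempty, so well-ordering gives $\ell_0=\min(J\cap L)$.
\begin{itemize}
\item For every $\ell\geq\ell_0$ in $L$, we have $\ell=\ell\cdot\ell_0=\ell\vee\ell_0\in J$, so $J$ contains the upper set of $\ell_0$.
\item Every element of $J\cap L$ is $\geq\ell_0$ by minimality.
\end{itemize}
Together these give $J=\{0\}\cup\{\ell\in L:\ell\geq\ell_0\}$. The ideal $J$ is proper because $1\in A\setminus\{0\}$.

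For primality, let $xy\in J$. If $xy=0$, entireness of $S$ (Theorem \ref{fusionsemiringlatticeordered}) gives $x=0$ or $y=0$, and $0\in J$. So assume $xy\neq 0$; then $x,y\neq 0$ and $xy\in L$, since $J$ has no nonzero elements of $A$. Now split into cases:
\begin{itemize}
\item Both $x,y\in A$ is impossible, since then $xy\in A\setminus\{0\}$ by entireness of $A$.
\item If exactly one factor, say $y$, lies in $L$, then $xy=y$, so $y\in J$.
\item If both lie in $L$, then $xy=\max\{x,y\}\in J$. The maximum is one of the two factors, so that factor is in $J$.
\end{itemize}

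\emph{Main obstacle.} The only step that needs care is Type II. The description of $J$ uses the existence of a minimum, which is where well-ordering enters. The primality argument uses totality of the order, so that the join of $x$ and $y$ equals one of the factors. Both properties are supplied by the well-ordering hypothesis. Entireness of $A$ is what rules out a product of two nonzero elements of $A$ landing in $J$.
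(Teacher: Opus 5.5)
Your proposal is correct and takes essentially the same route as the paper. In Type I, absorption $a\ell=\ell$ forces $L\subseteq J$; in Type II, the minimum $\ell_0$ together with $\ell=\ell\vee\ell_0$ gives the upper-set description, and primality follows because the join in a chain is one of the two factors. You are also slightly more careful than the paper: you explicitly match properness of $J$ with properness of $J_A$, and you rule out the case $x,y\in A$ in the Type II primality check, both of which the paper leaves implicit.
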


\begin{proof}
	We treat the two cases separately.
	
	\begin{enumerate}
		\item Let $a \in J \cap A$ be nonzero and $\ell \in L$. Since $J$ is an ideal of $S$, 
		\[
		\ell \cdot a = \sup\{\ell, a\} = \ell \in J.
		\]
		Therefore $L \subseteq J$, and so $J_A \cup L \subseteq J$. Conversely, if $x \in J$ and $x \notin L$, then $x \in A$, and thus $x \in J_A$. Hence $J = J_A \cup L$. Note that $J_A$ is a contraction of $J$ and so it is an ideal of $A$. Moreover, if $J$ is prime in $S$, then its contraction $J_A = J \cap A$ is prime in $A$. Conversely, suppose $J_A$ is prime in $A$ and let $x, y \notin J$. Since $L \subseteq J$, we have $x, y \in A \setminus J_A$. Because $J_A$ is prime, $x \cdot_A y \notin J_A$, hence $x \cdot y \notin J$. Therefore $J$ is prime in $S$.
		
		\item If $J$ contains no nonzero element of $A$, then every nonzero element of $J$ lies in $L$. Since $L$ is well-ordered, $J \setminus \{0\}$ has a smallest element $\ell_0$. Then 
		\[
		J \subseteq \{0\} \cup \{\ell \in L : \ell \geq \ell_0\}.
		\]
		For any $\ell \geq \ell_0$, we have $\ell = \ell \cdot \ell_0 = \sup\{\ell, \ell_0\} \in J$, so the reverse inclusion holds. Thus 
		\[
		J = \{0\} \cup \{\ell \in L : \ell \geq \ell_0\}.
		\]
		We now show $J$ is prime. Take $x, y \in S$ with $x \cdot y \in J$. 
		\begin{itemize}
			\item If $x \cdot y = 0$, then $x = 0$ or $y = 0$ because $S$ is entire, so $x \in J$ or $y \in J$.
			\item If $x \cdot y = \ell \geq \ell_0$ (nonzero), then the multiplication lies in $L$. 
			\begin{itemize}
				\item If $x, y \in L$, then $x \cdot y = \sup(x,y) \in J$. Hence the larger of $x,y$ is $\geq \ell_0$, so it belongs to $J$.
				\item If one factor is in $A \setminus \{0\}$ and the other in $L$, say $x = a \in A \setminus \{0\}$, $y = \ell' \in L$, then $x \cdot y = \sup(a,\ell') = \ell' \in J$, so $y \in J$.
			\end{itemize}
		\end{itemize}
		Thus in all cases $x \in J$ or $y \in J$, so $J$ is prime.
	\end{enumerate}
	This completes the proof.
\end{proof}

\begin{corollary}
	Let $J$ be a nonzero ideal of the semiring $\mathscr{C}$ of cardinal numbers. Then exactly one of the following cases happens:
	
	\begin{itemize}
		\item $J$ contains a nonzero finite cardinal. Then 
		\[
		J = J_f \cup \{\kappa : \kappa \text{ is an infinite cardinal}\},
		\]
		where $J_f \trianglelefteq \mathbb{N}_0$ is the set of all finite cardinals in $J$. Furthermore, $J$ is a prime ideal of $\mathscr{C}$ if and only if $J_f$ is a prime ideal of $\mathbb{N}_0$.
		
		\item $J$ contains no nonzero finite cardinal. Then 
		\[
		J = \{0\} \cup \{\kappa \in \mathscr{C} : \kappa \geq \alpha\},
		\]
		where $\alpha$ is the smallest infinite cardinal in $J$. In this case, $J$ is prime.
	\end{itemize}
\end{corollary}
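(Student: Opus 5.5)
The plan is to specialize Theorem \ref{fusionsemiringlatticeidealstructure} to $\mathscr{C}=\mathbb{N}_0\uplus_{\ord}\mathcal{L}$, where $\mathcal{L}$ is the class of infinite cardinals (Examples \ref{fusionrestrictedmaxplusalgebra}(1)). Both hypotheses of that theorem have to be checked. First, $\mathbb{N}_0$ is a positive information algebra. Second, $\mathcal{L}$ is well-ordered: under the Axiom of Choice every cardinal is an initial ordinal, so any nonempty class of infinite cardinals has a least element. Since $\mathcal{L}$ is a proper class, I will add the remark that all constructions in the paper are elementwise, so they apply verbatim to class-semirings. I will also confirm that cardinal arithmetic matches Definition \ref{fusionsemiringlatticedef}:
\begin{itemize}
\item $\kappa+\mu=\max\{\kappa,\mu\}$ whenever at least one of them is infinite;
\item $\kappa\mu=\max\{\kappa,\mu\}$ whenever both are nonzero and at least one is infinite;
\item $0\cdot\kappa=0$.
\end{itemize}
These are the standard absorption laws of ZFC cardinal arithmetic.

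Next, the dichotomy. A nonzero ideal $J$ either contains a nonzero element of $A=\mathbb{N}_0$ (Type I) or does not (Type II). These two cases are visibly exclusive and exhaustive, which gives the ``exactly one'' clause.

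In Type I, Theorem \ref{fusionsemiringlatticeidealstructure} gives $J=J_A\cup\mathcal{L}$ with $J_A=J\cap\mathbb{N}_0=J_f$. It also gives that $J$ is prime if and only if $J_f$ is prime in $\mathbb{N}_0$. That $J_f\trianglelefteq\mathbb{N}_0$ follows from Proposition \ref{fusionsemiringlatticeextcontractionideals}.

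In Type II, the theorem gives $J=\{0\}\cup\{\ell\in\mathcal{L}:\ell\ge\ell_0\}$ with $\ell_0=\min(J\cap\mathcal{L})$, and says that $J$ is prime. Setting $\alpha=\ell_0$, I will observe that every cardinal $\kappa\ge\alpha$ is automatically infinite. Hence $\{\kappa\in\mathscr{C}:\kappa\ge\alpha\}$ coincides with $\{\ell\in\mathcal{L}:\ell\ge\alpha\}$, which matches the stated form. Nonemptiness of $J\cap\mathcal{L}$ holds because $J$ is nonzero and contains no nonzero finite cardinal.

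The only real obstacle is foundational rather than algebraic. I need to justify applying a theorem stated for sets to the class $\mathscr{C}$, and to invoke AC for the well-ordering of infinite cardinals. I would handle this with a brief remark that the proof of Theorem \ref{fusionsemiringlatticeidealstructure} uses only elementwise reasoning together with the existence of minima of nonempty subclasses of $\mathcal{L}$, and that the latter holds for the cardinals $\aleph_\beta$ by the well-foundedness of the ordinals.
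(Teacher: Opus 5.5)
Your proposal is correct and follows the paper's proof: the paper simply notes that $\mathbb{N}_0$ is a positive information algebra and that the infinite cardinals are well-ordered, then reads off both cases from Theorem~\ref{fusionsemiringlatticeidealstructure}. Your extra checks are sound elaborations of details the paper leaves implicit, namely that cardinal arithmetic realizes the overflow operations, the class-versus-set remark, and that every $\kappa\ge\alpha$ is infinite so the two descriptions of the second-case ideal agree.
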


\begin{proof}
	The semiring $\mathbb{N}_0$ is a positive information algebra and the set of infinite cardinals is well-ordered by $\leq$ \cite[p. 48]{Jech2003}.
\end{proof}

\begin{remark}\label{fusionsemiringlatticecontractionnotinjective}
The contraction map $J \mapsto J \cap A$ in Proposition \ref{fusionsemiringlatticeextcontractionideals} is generally not injective on the full ideal lattice $\mathrm{Id}(S)$ of the overflow semiring $S$. Theorem \ref{fusionsemiringlatticeidealstructure} immediately provides counterexamples. For example, let $L$ be the two-element chain $\ell_1 < \ell_2$. By Theorem \ref{fusionsemiringlatticeidealstructure}(2), choosing $\ell_0=\ell_1$ and $\ell_0=\ell_2$ yields two distinct ideals of $S$ containing no nonzero elements of $A$:
	\[
	J_1=\{0\}\cup\{\ell\in L:\ell\ge \ell_1\}=\{0,\ell_1,\ell_2\},
	\]
	\[
	J_2=\{0\}\cup\{\ell\in L:\ell\ge \ell_2\}=\{0,\ell_2\}.
	\]
	Since
	\[
	J_1\cap A=\{0\}=J_2\cap A
	\]
	while $J_1\neq J_2$, the contraction map fails to be injective on $\mathrm{Id}(S)$.
\end{remark}

\begin{lemma}\label{maximalidealscontainL}
	Let $S = A \uplus_{\operatorname{ord}} L$ be an overflow semiring with $L \neq \emptyset$. Then every maximal ideal of $S$ contains $L$.
\end{lemma}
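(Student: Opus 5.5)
The plan is to argue by contradiction and use maximality. Let $M$ be a maximal ideal of $S$, and suppose some $\ell \in L$ does not lie in $M$. I would consider the set
\[
M' = M \cup L,
\]
or more cautiously the ideal $M + (\ell)$ generated by $M$ and $\ell$. The aim is to show that this enlarged ideal is still proper. Then maximality gives $M' = M$, so $\ell \in M$, which is a contradiction.

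The first step is to check that $M \cup L$ is an ideal of $S$. For closure under addition: if $x,y \in M$, then $x+y\in M$. If at least one of $x,y$ lies in $L$, then $x+y = \sup\{x,y\} \in L$, since $L$ is a join-semilattice and $A<L$. For absorption: take $s \in S$. If $x \in M$, then $sx \in M$. If $x \in L$, then $sx$ is either $0$ (when $s=0$) or $\sup\{s,x\}\in L$. Since $0 \in M$, we get $sx \in M\cup L$ in every case. This mirrors the argument in Proposition~\ref{fusionsemiringlatticeextcontractionideals}, except that $M$ need not be contained in $A$.

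The second step is properness, and I expect it to be the main point. We need $1 \notin M \cup L$. Since $1 \in A$, we have $1\notin L$, and $1 \notin M$ because $M$ is proper. Hence $M \cup L \neq S$. Since $M \subseteq M\cup L$ and $M$ is maximal, $M \cup L = M$, so $L \subseteq M$.

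The hypothesis $L\neq\emptyset$ is only needed so that the statement is non-vacuous; the argument itself does not use it. No well-ordering of $L$ is required. I would also remark, as a consequence, that $M \cap A$ is then a proper ideal of $A$ with $M = (M\cap A)\cup L$, which is the form used in what follows.
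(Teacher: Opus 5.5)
Your proof is correct and is essentially the paper's argument. Both rely on maximality together with the fact that any sum, or any nonzero product, involving an element of $L$ stays in $L$, so enlarging $M$ by elements of $L$ can never produce $1$. The only difference is packaging: the paper writes $1 = m + \ell s$ and rules it out case by case, whereas you check directly that $M \cup L$ is a proper ideal containing $M$, which spares you from describing the ideal generated by $M \cup \{\ell\}$.
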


\begin{proof}
	If $\ell \in L \setminus \mathfrak{M}$, maximality gives $1 = m + \ell s$ with $m \in \mathfrak{M}$, $s \in S$. 
	For $s = 0$ we obtain $1 \in \mathfrak{M}$, impossible. 
	For $s \neq 0$, $\ell s \in L$, so $m + \ell s \in L$ because a sum with an $L$-term lies in $L$, yet $1 \in A$ with $A \cap L = \emptyset$, contradiction. 
	Thus $L \subseteq \mathfrak{M}$.
\end{proof}

\begin{theorem}\label{fusionsemiringlatticemaximalideal}
	Let $S = A \uplus_{\ord} L$ be the overflow semiring, with $L$ nonempty and well-ordered. If $\mathfrak{m}$ is a maximal ideal of $A$, then $\mathfrak{m} \cup L$ is a maximal ideal of $S$. Conversely, if $\mathfrak{M}$ is a maximal ideal of $S$, then $\mathfrak{M} \cap A$ is a maximal ideal of $A$. In particular, $A$ is local if and only if $S$ is local.
\end{theorem}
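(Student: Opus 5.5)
The plan is to get both directions from Proposition \ref{fusionsemiringlatticeextcontractionideals}, Lemma \ref{maximalidealscontainL} and the Type I description in Theorem \ref{fusionsemiringlatticeidealstructure}. The key observation is that every ideal of $S$ containing $L$ has the form $I\cup L$ with $I=J\cap A$ an ideal of $A$. For the inclusion $J_A\cup L\subseteq J$ shown in the proof of Theorem \ref{fusionsemiringlatticeidealstructure}(1), we simply use $L\subseteq J$ instead of a nonzero element of $A$; the reverse inclusion is trivial. Combined with Proposition \ref{fusionsemiringlatticeextcontractionideals}, this gives an inclusion-preserving bijection $I\mapsto I\cup L$ between $\mathrm{Id}(A)$ and the ideals of $S$ containing $L$, with inverse $J\mapsto J\cap A$. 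Under it, $I$ is proper exactly when $I\cup L$ is proper, since $1\in A$.

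\emph{Forward direction.} Let $\mathfrak m$ be maximal in $A$. Then $\mathfrak m\cup L$ is a proper ideal of $S$. Suppose $J$ is a proper ideal with $\mathfrak m\cup L\subseteq J$. Then $J$ contains $L$, so $J=(J\cap A)\cup L$, and $J\cap A$ is a proper ideal of $A$ containing $\mathfrak m$. Maximality forces $J\cap A=\mathfrak m$, hence $J=\mathfrak m\cup L$.

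\emph{Converse.} Let $\mathfrak M$ be maximal in $S$. By Lemma \ref{maximalidealscontainL}, $L\subseteq\mathfrak M$, so $\mathfrak M=(\mathfrak M\cap A)\cup L$, and $\mathfrak M\cap A$ is a proper ideal of $A$. If $\mathfrak M\cap A\subseteq I$ with $I$ a proper ideal of $A$, then $\mathfrak M\subseteq I\cup L$. The right-hand side is a proper ideal of $S$, so maximality gives $\mathfrak M=I\cup L$, and therefore $I=\mathfrak M\cap A$.

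\emph{Locality.} The two maps $\mathfrak m\mapsto \mathfrak m\cup L$ and $\mathfrak M\mapsto\mathfrak M\cap A$ are mutually inverse between the sets of maximal ideals, so those sets have the same cardinality. Hence $A$ has a unique maximal ideal if and only if $S$ does.

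The only delicate point is Lemma \ref{maximalidealscontainL}. Its proof writes $1=m+\ell s$ by "maximality", which is a ring-style comaximality argument that need not hold in semirings, where the ideal generated by $\mathfrak M\cup\{\ell\}$ consists of elements $m+\ell s$. I would re-derive that step directly. The ideal $\mathfrak M+\ell S$ strictly contains $\mathfrak M$, so by maximality it equals $S$ and contains $1$. Any element $m+\ell s$ with $s\neq0$ lies in $L$, and with $s=0$ lies in $\mathfrak M$; in either case it is not $1$, a contradiction. This confirms the lemma. Well-orderedness of $L$ plays no essential role beyond quoting Theorem \ref{fusionsemiringlatticeidealstructure}.
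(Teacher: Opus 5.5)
Your proof is correct and follows essentially the same route as the paper: the correspondence $I\mapsto I\cup L$ between ideals of $A$ and ideals of $S$ containing $L$, Lemma~\ref{maximalidealscontainL} for the converse, and the resulting bijection of maximal ideals for locality. In the forward direction you argue that $J\cap A$ is a proper ideal containing $\mathfrak{m}$, whereas the paper generates $1$ from $\mathfrak{m}$ and some $a\in J\setminus(\mathfrak{m}\cup L)$; these are equivalent. Your concern about Lemma~\ref{maximalidealscontainL} is unfounded: in a commutative semiring the ideal generated by $\mathfrak{M}\cup\{\ell\}$ is exactly $\mathfrak{M}+\ell S$, so the paper's step ``$1=m+\ell s$ by maximality'' is precisely the argument you re-derive.
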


\begin{proof}
	Let $\mathfrak{m}$ be a maximal ideal of $A$. Then $\mathfrak{m} \cup L$ is an ideal of $S$ by Proposition \ref{fusionsemiringlatticeextcontractionideals}. Suppose $J$ is an ideal of $S$ properly containing $\mathfrak{m} \cup L$. Then $J$ contains some $a \notin \mathfrak{m} \cup L$. Since $A$ and $L$ are disjoint, $a \in A \setminus \mathfrak{m}$. Because $\mathfrak{m}$ is maximal in $A$, the ideal generated by $\mathfrak{m}$ and $a$ in $A$ is $A$ itself; hence $1_A \in A$ belongs to this ideal. Consequently $1_A \in J$, which forces $J = S$. Thus $\mathfrak{m} \cup L$ is maximal in $S$.
	
	Conversely, let $\mathfrak{M}$ be a maximal ideal of $S$. By Lemma \ref{maximalidealscontainL}, we have $L \subseteq \mathfrak{M}$. Hence $\mathfrak{M} = (\mathfrak{M} \cap A) \cup L$. Now $\mathfrak{M} \cap A$ is an ideal of $A$ by Proposition \ref{fusionsemiringlatticeextcontractionideals}. If $\mathfrak{M} \cap A$ were not maximal, there would exist a proper ideal $\mathfrak{n}$ of $A$ such that $\mathfrak{M} \cap A \subset \mathfrak{n} \subset A$. Then $\mathfrak{n} \cup L$ is an ideal of $S$ properly containing $\mathfrak{M}$ because
	\[
	\mathfrak{M} = (\mathfrak{M} \cap A) \cup L \subset \mathfrak{n} \cup L,
	\]
	and $\mathfrak{n} \cup L \neq S$ because $1_A \notin \mathfrak{n}$. This contradicts the maximality of $\mathfrak{M}$. Therefore $\mathfrak{M} \cap A$ is maximal in $A$.
	
	The final statement follows immediately: $A$ has a unique maximal ideal if and only if $S$ does, by the bijective correspondence established above.
\end{proof}

\begin{proposition}\label{fusionsemiringlatticelocal}
Let $A$ be a positive information algebra such that $a \geq 1$ for all nonzero $a \in A$. Let $L$ be a join-semilattice. Then the overflow semiring $S = A \uplus_{\ord} L$ is local and its unique maximal ideal is $\mathfrak{M} = S \setminus \{1\}$.
\end{proposition}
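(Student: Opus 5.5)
We prove the statement by showing that $\mathfrak{M} = S \setminus \{1\}$ is an ideal of $S$, and then that every proper ideal of $S$ is contained in it. Locality then follows at once, and $\mathfrak{M}$ is the unique maximal ideal. Note that $\mathfrak{M}$ contains $0$ and all of $L$, and that $\mathfrak{M} \cap A = A \setminus \{1\}$. By Proposition \ref{fusionsemiringlatticeunits} we have $U(S)=U(A)$. The hypothesis $a \ge 1$ for nonzero $a$ should force $U(A)=\{1\}$, and the first step is to check this: if $ab = 1$ with $a,b \neq 0$, then $a,b \ge 1$.

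\textbf{Closure under addition.} Let $x,y \in \mathfrak{M}$. If either lies in $L$, then $x+y \in L \subseteq \mathfrak{M}$. If $x=0$ or $y=0$, the sum is the other element, which lies in $\mathfrak{M}$. Otherwise $x,y \in A \setminus\{0,1\}$, and so $x,y \ge 1$. Compatibility of the order with addition gives $x+y \ge 1+y \ge y$. Suppose $x+y=1$. Then $1 \ge x + y \ge 1 + y$, and the same argument gives $1 \ge x+1$. Combining $x \ge 1$ with positivity, $x + 1 \ge 1 + 1 \ge 1 + 0 = 1$, so $x+1=1$, and similarly $y+1=1$. From $1 \le x$ we then get $1 = 1+1 \le x+1 = 1$, which is consistent but gives no contradiction yet. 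The cleaner route is: $1 = x+y \ge x + 0 = x \ge 1$, so $x = 1$, a contradiction. Indeed $y \ge 0$ and additive compatibility give $x+y \ge x$. So $x+y \neq 1$.

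\textbf{Absorption under multiplication.} Let $x \in \mathfrak{M}$ and $s \in S$. If $s=0$ or $x=0$, the product is $0$. If either factor lies in $L$ and both are nonzero, the product lies in $L$. Otherwise $x \in A\setminus\{0,1\}$ and $s \in A\setminus\{0\}$. Then $s \ge 1$, and multiplicative compatibility gives $xs \ge x\cdot 1 = x$; likewise $xs \ge s \ge 1$. If $xs = 1$, then $1 = xs \ge x \ge 1$, so $x=1$, a contradiction. Hence $xs \in \mathfrak{M}$.

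\textbf{Uniqueness.} Any proper ideal $I$ of $S$ omits $1$, so $I \subseteq S\setminus\{1\} = \mathfrak{M}$. Since $\mathfrak{M}$ is itself proper, it is the unique maximal ideal, and $S$ is local. No well-ordering of $L$ is needed, since we avoid Theorem \ref{fusionsemiringlatticemaximalideal}.

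The only delicate point is the antisymmetry step in the two closure arguments: each time we use $x \le 1$ together with $x \ge 1$ to force $x=1$. This step uses additive compatibility with $0 \le y$, and multiplicative compatibility with $1 \le s$. Both are available in a positive semiring.
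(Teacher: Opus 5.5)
Your argument is correct and is essentially the paper's proof. You show that $S\setminus\{1\}$ is an ideal by the same case split, using $x+y\ge x\ge 1$ and $xs\ge x\ge 1$ with antisymmetry to rule out the value $1$, and then observe that every proper ideal omits $1$. The abandoned detour in the addition step and the unfinished remark about $U(A)=\{1\}$ are never used and can simply be deleted.
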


\begin{proof}
Let $\mathfrak{M} = S \setminus \{1\}$. Clearly $0 \in \mathfrak{M}$ and $1 \notin \mathfrak{M}$. If $x,y \in \mathfrak{M}$ and both lie in $A$ and one of them is $0$, their sum is the other; if both $>1$ then $x+y \ge x > 1$ (since $y \ge 0$). If at least one summand is in $L$, their sum is their supremum, which lies in $L$ and is $>1$. Hence $x+y \in \mathfrak{M}$. Now, let $s \in S$. If $s=1$ then clearly $sx \in \mathfrak{M}$. If $s \neq 1$, then $s \in \mathfrak{M}$. If both $s,x$ are in $A\setminus\{0\}$, then $sx \ge x \ge 1$. If $sx = 1$ then $x \le sx = 1$, forcing $x=1$, contradiction. Thus $sx \in \mathfrak{M}$. If one factor is in $L$ and the other is nonzero, their multiplication is the supremum, which lies in $L$ and is $>1$. If one factor is 0, then their multiplication is also 0. Thus in general $sx \neq 1$. Hence $\mathfrak{M}$ is an ideal. It is maximal because any strictly larger ideal must contain $1$ and thus equals $S$. Every proper ideal is contained in $\mathfrak{M}$, so $\mathfrak{M}$ is the unique maximal ideal. Hence $S$ is local.
\end{proof}

\begin{example}
	Observe that $B = \{0\} \cup [1, +\infty)$ is a subsemiring of $\mathbb{R}_{\geq 0}$ with the usual operations. Take any subsemiring $A$ of $B$, and let $L$ be the class of infinite cardinal numbers under the usual cardinal ordering. Then $S = A \uplus_{\ord} L$ satisfies the conditions of Proposition~\ref{fusionsemiringlatticelocal}.
\end{example}

Following the terminology for PM-rings (cf. \cite{Contessa1981}), we define PM-semirings as follows:

\begin{definition}\label{PMsemiringdef}
A semiring $S$ is called a PM-semiring if each prime ideal of $S$ is contained in a unique maximal ideal of $S$.
\end{definition}

\begin{proposition}\label{overflowsemiringPM}
Let $S = A \uplus_{\ord} L$ be the overflow semiring and let $L$ be well-ordered. Then the following statements are equivalent:
	\begin{enumerate}
		\item $A$ is local.
		\item $S$ is local.
		\item $S$ is a PM-semiring.
	\end{enumerate}
\end{proposition}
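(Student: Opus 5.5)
The plan is to prove the cycle (1)$\Leftrightarrow$(2), (2)$\Rightarrow$(3), (3)$\Rightarrow$(2). The equivalence (1)$\Leftrightarrow$(2) is exactly the last assertion of Theorem \ref{fusionsemiringlatticemaximalideal}. That theorem uses the hypotheses that $L$ is nonempty and well-ordered. Well-orderedness is assumed here. If $L=\emptyset$, then $S=A$ and everything is trivial, so I will assume $L\neq\emptyset$ throughout. The implication (2)$\Rightarrow$(3) is immediate: in a local semiring every prime ideal is proper, hence is contained in some maximal ideal by Zorn's lemma (every proper ideal avoids $1$). The only maximal ideal available is the unique one, so the maximal ideal containing a given prime is unique.

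The real content is (3)$\Rightarrow$(2). I would exploit the Type II prime ideals from Theorem \ref{fusionsemiringlatticeidealstructure}. Since $L\neq\emptyset$ and $L$ is well-ordered, let $\ell_0=\min L$. Then
\[
P=\{0\}\cup\{\ell\in L:\ell\ge \ell_0\}=\{0\}\cup L
\]
is an ideal of $S$ containing no nonzero element of $A$, so it is prime by Theorem \ref{fusionsemiringlatticeidealstructure}(Type II). Alternatively, one checks directly that $\{0\}\cup L$ is closed under addition and absorbs multiplication, and that $xy\in\{0\}\cup L$ with $x,y\in A$ forces $xy=0$, hence $x=0$ or $y=0$ by entireness.

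Next, by Lemma \ref{maximalidealscontainL}, every maximal ideal $\mathfrak{M}$ of $S$ contains $L$, and also contains $0$. Hence every maximal ideal contains $P$. The PM hypothesis says $P$ lies in a unique maximal ideal, so $S$ has exactly one maximal ideal. This requires at least one maximal ideal to exist, which holds since $\{0\}$ is proper and Zorn applies. Therefore $S$ is local.

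I expect the main obstacle to be bookkeeping rather than substance. First, I must make sure that $\{0\}\cup L$ really is prime; the case $x,y\in A\setminus\{0\}$ uses that $A$ is entire. Second, I must handle existence of maximal ideals containing a given proper ideal in the semiring setting. That argument is the standard Zorn argument, since a union of a chain of ideals not containing $1$ is an ideal not containing $1$. Finally, the degenerate case $L=\emptyset$ should be excluded or treated separately. In that case $S=A$ and statement (3) does not imply locality, so I would note that nonemptiness of $L$ is implicit, as in Theorem \ref{fusionsemiringlatticemaximalideal}.
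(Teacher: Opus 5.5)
Your proof is correct and has the same skeleton as the paper's. You get (1)$\Leftrightarrow$(2) from Theorem \ref{fusionsemiringlatticemaximalideal}, (2)$\Rightarrow$(3) directly, and (3)$\Rightarrow$(2) by exhibiting a prime ideal contained in every maximal ideal.

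The only difference is which prime you use. You take $\{0\}\cup L$. This forces you to check primality (via Type II or your direct argument) and then to invoke Lemma \ref{maximalidealscontainL} to see that every maximal ideal contains it. The paper takes $\{0\}$ instead: it is prime because $S$ is entire, and every maximal ideal contains it automatically. So that direction needs nothing about $L$, and the argument shows that any entire semiring is PM exactly when it is local.

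One correction: your closing claim that for $L=\emptyset$ statement (3) does not imply locality is false. In that case $S=A$ is still an information algebra. Hence $\{0\}$ is a prime ideal of $A$, and the PM property immediately gives a unique maximal ideal. Nonemptiness of $L$ matters only for quoting Theorem \ref{fusionsemiringlatticemaximalideal}. When $L=\emptyset$ the equivalence (1)$\Leftrightarrow$(2) is trivial because $S=A$, so no exclusion of this case is needed.
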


\begin{proof}
	$(1) \Leftrightarrow (2)$: Theorem \ref{fusionsemiringlatticemaximalideal}.
	
	$(2) \Rightarrow (3)$: A local semiring has a unique maximal ideal containing every prime ideal, hence is PM.
	
	$(3) \Rightarrow (1)$: Since $S$ is entire, the ideal $\{0\}$ is prime. In a PM-semiring, $\{0\}$ lies in a unique maximal ideal, so $S$ is local. By $(1)\Leftrightarrow(2)$, $A$ is local.
\end{proof}

\begin{proposition}\label{fusionsemiringlatticelocaldual}
	Let $A$ be a positive information algebra such that $a \leq 1$ for all $a \in A$, and assume that $a,b<1$ implies $a+b<1$. Let $L$ be a join-semilattice. Then the overflow semiring $S = A \uplus_{\ord} L$ is local with unique maximal ideal $\mathfrak{M} = S \setminus \{1\}$.
\end{proposition}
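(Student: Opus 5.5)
We mirror the proof of Proposition \ref{fusionsemiringlatticelocal}: show directly that $\mathfrak{M} = S \setminus \{1\}$ is an ideal of $S$. Once this is done, maximality and uniqueness are formal. $\mathfrak{M}$ is proper since $1 \notin \mathfrak{M}$. Any ideal strictly containing $\mathfrak{M}$ contains $1$ and hence equals $S$. Every proper ideal avoids $1$, so it lies inside $\mathfrak{M}$. So $S$ is local with maximal ideal $\mathfrak{M}$.

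\emph{Closure under addition.} Let $x,y \in \mathfrak{M}$. If at least one of them lies in $L$, then $x+y = \sup\{x,y\} \in L$, so $x+y \neq 1$. If both lie in $A$, then by hypothesis $x \le 1$ and $y \le 1$, and since neither equals $1$ we have $x,y<1$. The assumption that $a,b<1$ implies $a+b<1$ then gives $x+y<1$, in particular $x+y \neq 1$.

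\emph{Absorption.} Let $x \in \mathfrak{M}$ and $s \in S$; we must show $sx \neq 1$. If $s=0$ or $x=0$, then $sx = 0 \neq 1$. If either factor lies in $L$ and the other is nonzero, then $sx = \sup\{s,x\} \in L$, so $sx \neq 1$. It remains to treat $s,x \in A \setminus \{0\}$, where $x<1$ and $s \le 1$. Multiplying $s \le 1$ by $x$ and using compatibility of the order with multiplication in $A$ gives $sx \le x < 1$, hence $sx \neq 1$.

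The only delicate point is this last step. It needs the order compatibility with multiplication in the form $s \le 1 \Rightarrow sx \le 1\cdot x$, which holds because $x \ge 0$ in the positive semiring $A$, together with antisymmetry to turn $sx \le x < 1$ into $sx \neq 1$. Everything else is the same case split on $A$ versus $L$ already used in Proposition \ref{fusionsemiringlatticelocal}, with the inequalities reversed.
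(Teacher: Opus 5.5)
Your proof is correct and follows the paper's argument essentially verbatim. Like the paper, you check additive closure of $\mathfrak{M}=S\setminus\{1\}$ by splitting into the case where both summands lie in $A$ (using $a,b<1\Rightarrow a+b<1$) and the case where one lies in $L$, check absorption by the same $A$/$L$ case split with $sx\le 1\cdot x=x<1$ as the key step, and conclude that $\mathfrak{M}$ is the unique maximal ideal because every proper ideal avoids $1$.
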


\begin{proof}
Set $\mathfrak{M} = S \setminus \{1\}$. Clearly $0\in\mathfrak{M}$ and $1\notin\mathfrak{M}$. If $x,y\in\mathfrak{M}$ and both lie in $A$, then $x,y<1$; by hypothesis $x+y<1$, so $x+y\in\mathfrak{M}$. If at least one lies in $L$, then $x+y=\sup\{x,y\}\in L$, hence $\neq 1$ and in $\mathfrak{M}$.
	
\medskip \noindent Now, let both $s \in S$ and $x\in\mathfrak{M}$ be nonzero. If $x\in L$, then $sx = \sup\{s,x\}\in L$ and so $sx \in \mathfrak{M}$. If $x\in A\setminus\{0\}$, then $x<1$. Now, if $s\in A$, monotonicity gives $sx\le 1\cdot x=x<1$, so $sx\neq1$. If $s\in L$, $sx=\sup\{s,x\}=s\in L$, hence $sx\neq1$. Thus $\mathfrak{M}$ is an ideal. Any proper ideal cannot contain $1$, therefore is contained in $\mathfrak{M}$. Hence $\mathfrak{M}$ is the unique maximal ideal and $S$ is local.
\end{proof}

\begin{example}
The restricted max-plus algebra $A = [-\infty, 0]$, with addition $x \oplus y = \max(x,y)$ and multiplication $x \odot y = x + y$, satisfies the conditions of Proposition~\ref{fusionsemiringlatticelocaldual}. Indeed, the additive identity is $0_A = -\infty$ and the multiplicative identity is $1_A = 0$. Every $a \in A$ with $a \neq 0_A$ (i.e., $a \in (-\infty, 0]$) satisfies $a \leq 1_A$. Moreover, if $a,b < 1_A$, then $a,b < 0$, and their sum $a \oplus b = \max(a,b) < 0 = 1_A$.
\end{example}

\begin{definition}\label{Krulldimensiondef}
Let $S$ be a semiring. The Krull dimension of $S$, denoted $\dim S$, is the supremum of all integers $n \geq 0$ such that there exists a strictly ascending chain
	\[
	P_0 \subset P_1 \subset \cdots \subset P_n
	\]
	of prime ideals of $S$. If no such finite supremum exists, we set $\dim S = \infty$.
\end{definition}

\begin{theorem}\label{Krulldimensionoffusion}
	Let $S = A \uplus_{\ord} L$ be the overflow semiring, where $L$ is a well-ordered set. Then the Krull dimension of $S$ is given by
	\[
	\dim S = 
	\begin{cases}
		\dim A, & \text{if } L = \varnothing \text{ and } \dim A < \infty,\\[4pt]
		\infty, & \text{if } L \text{ is infinite} \text{ or } \dim A = \infty,\\[4pt]
		|L| + \dim A, & \text{if } L \text{ is finite and nonempty and } \dim A < \infty.
	\end{cases}
	\]
\end{theorem}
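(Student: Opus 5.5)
The plan is to read off the full prime spectrum of $S$ from Theorem \ref{fusionsemiringlatticeidealstructure} and then count the lengths of chains directly. If $L=\varnothing$ then $S=A$ and there is nothing to prove, so assume $L\neq\varnothing$ for the rest.

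\emph{Step 1: list the primes of $S$.} Since $S$ is entire (Theorem \ref{fusionsemiringlatticeordered}), the zero ideal $\{0\}$ is prime. By Theorem \ref{fusionsemiringlatticeidealstructure}, every nonzero prime of $S$ is of one of two kinds.
\begin{itemize}
\item Type II: $P_\ell=\{0\}\cup\{\ell'\in L:\ell'\ge\ell\}$ for some $\ell\in L$. Each such set is prime, and $\ell\mapsto P_\ell$ is injective and order-reversing.
\item Type I: $\mathfrak p\cup L$, where $\mathfrak p$ is a nonzero prime of $A$. This uses the ``moreover'' part of Type I together with Proposition \ref{fusionsemiringlatticeextcontractionideals}, which gives that $\mathfrak p\mapsto\mathfrak p\cup L$ is injective and inclusion-preserving.
\end{itemize}
For comparisons between the kinds:
\begin{itemize}
\item Every Type II prime is contained in every Type I prime, because Type I primes contain $L$.
\item No Type I prime is contained in a Type II prime, because a Type I prime contains a nonzero element of $A$.
\item The largest Type II prime is $P_{\min L}=\{0\}\cup L$, and it is strictly contained in every Type I prime.
\end{itemize}

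\emph{Step 2: upper bound.} Take a strict chain $Q_0\subset\cdots\subset Q_n$ of primes of $S$. By Step 1, it consists of possibly $\{0\}$, then some Type II primes, then some Type I primes. The Type II members are pairwise distinct, so there are at most $|L|$ of them. The Type I members $\mathfrak p_i\cup L$ contract to a strict chain of nonzero primes of $A$. Since $\{0\}$ is prime in $A$ ($A$ is entire), we can prepend $\{0\}$ to that chain, so it has at most $\dim A$ members. Hence the chain has at most $1+|L|+\dim A$ members, which gives $n\le |L|+\dim A$ when both quantities are finite.

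\emph{Step 3: lower bound and the infinite cases.} Suppose $\{0\}=\mathfrak p_0\subset\mathfrak p_1\subset\cdots\subset\mathfrak p_m$ is a chain in $A$. Again by entireness, any chain in $A$ may be assumed to start at $\{0\}$, so we may take $m=\dim A$. Write $L=\{\ell_1<\cdots<\ell_k\}$. Then
\[
\{0\}\subset P_{\ell_k}\subset\cdots\subset P_{\ell_1}\subset \mathfrak p_1\cup L\subset\cdots\subset\mathfrak p_m\cup L
\]
is a strict chain of primes of length $k+m$. The infinite cases follow the same pattern:
\begin{itemize}
\item If $L$ is infinite, it contains arbitrarily long finite chains $\ell_1<\cdots<\ell_k$ (in fact an initial copy of $\omega$). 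The corresponding $P_{\ell_i}$ give prime chains of every finite length, so $\dim S=\infty$.
\item If $\dim A=\infty$, lifting arbitrarily long chains of primes of $A$ via $\mathfrak p\mapsto\mathfrak p\cup L$ gives $\dim S=\infty$.
\end{itemize}

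The main obstacle is the bookkeeping in Step 1. The key points are that $\{0\}$ is prime in $S$ but is not covered by either Type, and that the Type I correspondence only accounts for the \emph{nonzero} primes of $A$. Concretely, one must check that $\{0\}\cup L$ is both the Type II ideal $P_{\min L}$ and the formal image of $\{0\}\subseteq A$ under $\mathfrak p\mapsto\mathfrak p\cup L$, so that it is not counted twice. Once that identification is made, the count $1+|L|+\dim A$ primes, that is length $|L|+\dim A$, is exact.
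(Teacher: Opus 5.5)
Your proposal is correct and follows the paper's approach: classify the primes via Theorem~\ref{fusionsemiringlatticeidealstructure}, place the reversed Type~II chain below the chain lifted from $A$ by $\mathfrak p\mapsto\mathfrak p\cup L$ for the lower bound, and count primes of each type for the upper bound. Your bookkeeping is more careful than the paper's, and it repairs two slips there. First, the paper's chain $I_{\ell_k}\subset\cdots\subset I_{\ell_1}\subset P_0\cup L\subset\cdots\subset P_n\cup L$ is not strict at $I_{\ell_1}\subseteq P_0\cup L$: entireness of $A$ forces $P_0=\{0\}$ when $n=\dim A$, so both ideals equal $\{0\}\cup L$. Second, the paper's upper bound ignores the prime $\{0\}$ of $S$ while allowing $n+1$ Type~I primes. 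These two errors cancel numerically, and your prepending of $\{0\}$, together with restricting Type~I to nonzero primes of $A$, is exactly the fix.
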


\begin{proof}
	If $L = \varnothing$, then $S = A$, so $\dim S = \dim A$. If $\dim A = \infty$, then $S$ contains an infinite chain from $A$ lifted via $I \mapsto I \cup L$, so $\dim S = \infty$. If $L$ is infinite, then for any $m \in \mathbb{N}$ choose $\ell_1 < \cdots < \ell_m$ in $L$ and a chain $P_0 \subset \cdots \subset P_n$ in $A$; then
	\[
	I_{\ell_m} \subset \cdots \subset I_{\ell_1} \subset P_0 \cup L \subset \cdots \subset P_n \cup L
	\]
	is a chain of length $m + n$. Since $m$ is arbitrary, $\dim S = \infty$.
	
	Now suppose $L$ is finite and nonempty with $|L| = k$ and $\dim A = n < \infty$. Since $\dim A = n$, there exists a chain $P_0 \subset \cdots \subset P_n$ of primes in $A$. Lifting gives $P_0 \cup L \subset \cdots \subset P_n \cup L$ in $S$. List $L$ increasingly: $\ell_1 < \cdots < \ell_k$. For each $\ell_i$, the ideal $I_{\ell_i} = \{0\} \cup \{\ell \in L : \ell \geq \ell_i\}$ is prime (Type II), and $\ell_i < \ell_{i+1}$ implies $I_{\ell_{i+1}} \subset I_{\ell_i}$. Moreover, $I_{\ell_1} \subset P_0 \cup L$. Concatenating,
	\[
	I_{\ell_k} \subset \cdots \subset I_{\ell_1} \subset P_0 \cup L \subset \cdots \subset P_n \cup L
	\]
	is a chain of length $k + n$, so $\dim S \geq k + n$. By Theorem \ref{fusionsemiringlatticeidealstructure}, every prime ideal of $S$ is either Type I ($P \cup L$) or Type II ($I_\ell$), and any chain can contain at most $k$ Type II primes (all of them, in reverse order) followed by at most $n+1$ Type I primes. Therefore $\dim S \leq k + n$. Hence, $\dim S = k + n$, as claimed.
\end{proof}

\begin{examples} We illustrate the Krull dimension of the overflow semiring $S = A \uplus_{\ord} L$ below:
	
	\begin{enumerate}
		\item Let $A$ be a zerosumfree positive semifield (so $\dim A = 0$) and let $L$ be a finite chain of length $m$ (i.e., $|L| = m$). Then $\dim S = m$.
		
		\item The Krull dimension of the semiring $\mathscr{C}$ of cardinal numbers is $\infty$ because we have infinitely many cardinal numbers, including $\beth_0 = \aleph_0$ and $\beth_k = 2^{\beth_{k-1}}$ for every positive integer $k$ \cite[p. 55]{Jech2003}.
	\end{enumerate}
	
\end{examples}

\begin{lemma}\label{fusionsemiringlatticeidealgeneratingset}
	Let $S = A \uplus_{\ord} L$ be an overflow semiring with $L$ well-ordered and $I \trianglelefteq S$.
	\begin{itemize}
		\item If $I \cap A = \{0\}$, then $I = (\ell_0)$ with $\ell_0 = \min(I \cap L)$.
		\item If $I$ contains a nonzero $a \in A$, then $I = I_A \cup L$ where $I_A = I \cap A$. Moreover:
		\begin{itemize}
			\item If $G$ generates $I_A$ in $A$, then $G \cup \{\min L\}$ generates $I$ in $S$.
			\item If $H$ generates $I$ in $S$, then $H \cap A$ generates $I_A$ in $A$.
		\end{itemize}
	\end{itemize}
\end{lemma}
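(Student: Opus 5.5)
The plan is to reduce everything to Theorem \ref{fusionsemiringlatticeidealstructure}, which already describes the shape of every nonzero ideal, and then to check the generating-set claims directly from the multiplication rules. Throughout I use two facts. First, for $a \in A\setminus\{0\}$ and $\ell \in L$ we have $a\ell = \ell$. Second, for $\ell,\ell' \in L$ we have $\ell\ell' = \ell \vee \ell'$.

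\emph{First item.} Suppose $I \cap A = \{0\}$. If $I = \{0\}$, the statement is degenerate: $\min(I\cap L)$ does not exist, so I would note that $I = (0)$ and read the claim for nonzero $I$. For $I \neq \{0\}$, Type II of Theorem \ref{fusionsemiringlatticeidealstructure} gives $I = \{0\}\cup\{\ell \in L : \ell \ge \ell_0\}$ with $\ell_0 = \min(I\cap L)$. Now compute
\[
(\ell_0) = \ell_0 S = \{0\} \cup \{\ell_0\} \cup \{\ell_0 \vee \ell : \ell \in L\}.
\]
This set is contained in $I$. Conversely, each $\ell \ge \ell_0$ equals $\ell_0\ell$, so $(\ell_0) = I$.

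\emph{Second item, decomposition and first bullet.} If $I$ contains a nonzero $a \in A$, Type I gives $I = I_A \cup L$. Let $G$ generate $I_A$ in $A$ and set $m = \min L$, which exists because $L$ is well-ordered and nonempty (it is nonempty here, since $I \supseteq L$ is being asserted). Let $J$ be the ideal of $S$ generated by $G \cup \{m\}$.
\begin{itemize}
\item $J \subseteq I$, since $G \cup \{m\} \subseteq I$.
\item $J \supseteq I_A$: every element of $I_A$ is an $A$-linear combination of elements of $G$, and $A$-arithmetic is unchanged inside $S$.
\item $J \supseteq L$: every $\ell \in L$ satisfies $\ell = m\ell = m \vee \ell$.
\end{itemize}
Hence $J = I$.

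\emph{Second item, second bullet.} This is the step needing care. Take $x \in I_A$ and write $x = \sum s_i h_i$ with $h_i \in H$ and $s_i \in S$. Any summand with $s_i \in L$ or $h_i \in L$ is either $0$ or lies in $L$. A sum containing an $L$-term lies in $L$, but $x \in A$. Therefore every nonzero summand has both $s_i \in A$ and $h_i \in A$, so each such $h_i \in H \cap A$. It follows that $x$ lies in the $A$-ideal generated by $H\cap A$, and the reverse inclusion is clear. This is where zerosumfreeness and entireness of $A$ are implicitly used: nonzero $A$-products stay in $A\setminus\{0\}$, and the dichotomy ``sum in $A$ iff all summands in $A$'' holds. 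I expect this bookkeeping to be the main obstacle, together with the degenerate case $I = \{0\}$ in the first item.
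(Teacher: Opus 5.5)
Your proposal is correct and follows the same route as the paper: Theorem~\ref{fusionsemiringlatticeidealstructure} gives the shape of $I$, generation is checked via $\ell=\ell_0\ell$ and $\ell=\ell\cdot\min L$, and the second bullet uses that a sum containing an $L$-term lies in $L$; your version is in fact a little more careful than the paper's, since you treat the case $I=\{0\}$ and you drop zero summands such as $0\cdot\ell$, which the paper's ``each $s_i,h_i\in A$'' glosses over. Two small quibbles: $I\supseteq L$ does not force $L\neq\varnothing$ (nonemptiness is simply implicit in writing $\min L$), and your last step uses only the definitions of $+$ and $\cdot$ on $S$, not zerosumfreeness or entireness of $A$.
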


\begin{proof}
	The first case is Theorem \ref{fusionsemiringlatticeidealstructure}. For the second, $I = I_A \cup L$ by Theorem \ref{fusionsemiringlatticeidealstructure}. If $G$ generates $I_A$ in $A$, then $G \cup \{\min L\}$ generates $I$ because every $\ell \in L$ equals $\ell \cdot \min L$. Conversely, if $H$ generates $I$ in $S$, any $a \in I_A$ is a finite sum $\sum s_i h_i$. Terms landing in $L$ would force $a \in L$, impossible; hence each $s_i, h_i \in A$, so $H \cap A$ generates $I_A$ in $A$.
\end{proof}

Recall that a semiring $S$ is Noetherian if and only if every ideal of $S$ is finitely generated \cite[Proposition 6.16]{Golan1999}.

\begin{theorem}\label{fusionsemiringlatticeNoetherian}
	Let $S = A \uplus_{\ord} L$ be the overflow semiring, where $A$ is a positive information algebra and $L$ is a well-ordered set. Then $S$ is Noetherian if and only if $A$ is Noetherian.
\end{theorem}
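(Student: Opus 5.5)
The plan is to use the characterization of Noetherian semirings as those in which every ideal is finitely generated, and to transfer finite generation between $S$ and $A$ via Lemma \ref{fusionsemiringlatticeidealgeneratingset} together with Proposition \ref{fusionsemiringlatticeextcontractionideals}. The degenerate case $L=\varnothing$ gives $S=A$ and is trivial, so assume $L\neq\varnothing$, hence $\min L$ exists by well-ordering.

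For the direction ``$A$ Noetherian $\Rightarrow$ $S$ Noetherian'', take an arbitrary ideal $I\trianglelefteq S$ and split into the cases of Lemma \ref{fusionsemiringlatticeidealgeneratingset}. If $I=\{0\}$ it is generated by $0$. If $I\neq\{0\}$ but $I\cap A=\{0\}$, the first bullet of the lemma gives $I=(\ell_0)$ with $\ell_0=\min(I\cap L)$, which is principal. Here well-ordering of $L$ is exactly what guarantees the minimum exists. If $I$ contains a nonzero element of $A$, then $I=I_A\cup L$ with $I_A=I\cap A\trianglelefteq A$. Since $A$ is Noetherian, $I_A$ has a finite generating set $G$, and by the lemma $G\cup\{\min L\}$ is a finite generating set of $I$ in $S$. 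Hence every ideal of $S$ is finitely generated.

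For the converse, let $\mathfrak a\trianglelefteq A$. If $\mathfrak a=\{0\}$ there is nothing to show, so assume $\mathfrak a\neq\{0\}$. By Proposition \ref{fusionsemiringlatticeextcontractionideals}, $I=\mathfrak a\cup L$ is an ideal of $S$ with $I\cap A=\mathfrak a$, and it contains a nonzero element of $A$. As $S$ is Noetherian, $I$ is generated by some finite set $H$, and by the last bullet of Lemma \ref{fusionsemiringlatticeidealgeneratingset} the finite set $H\cap A$ generates $\mathfrak a$ in $A$.

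The main point to check is that last transfer step, which is where I expect the only real care to be needed. Write $a=\sum s_ih_i$ with $a\in\mathfrak a\setminus\{0\}$. Any summand with $h_i\in L$ and $s_i\neq 0$ lies in $L$, and any sum containing an $L$-term lies in $L$. So every nonzero summand has $s_i,h_i\in A$, and the sum is computed in $A$. Since the lemma is already stated earlier, I would simply invoke it and remark briefly on this point rather than redo it.
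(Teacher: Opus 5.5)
Your proof is correct and is essentially the paper's argument. Both directions transfer finite generation through Lemma \ref{fusionsemiringlatticeidealgeneratingset}: you use $\mathfrak a\cup L$ for $S\Rightarrow A$, and the split into ideals meeting $A$ only in $0$ versus ideals containing a nonzero element of $A$ for $A\Rightarrow S$. Your separate treatment of $I=\{0\}$ and $L=\varnothing$ is a small genuine improvement, since the paper's appeal to $\min(I\cap L)$ and $\min L$ is undefined in those cases. In your transfer remark, also note that a summand with $s_i\in L$ and $h_i\in A\setminus\{0\}$ gives $s_ih_i=s_i\in L$; this is what forces $s_i\in A$ and not only $h_i\in A$.
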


\begin{proof}
$(\Rightarrow)$ Suppose $S$ is Noetherian. Assume, toward a contradiction, that $A$ is not Noetherian. Then there exists an ideal $I_A$ of $A$ that is not finitely generated. Since the zero ideal $\{0\}$ is finitely generated, $I_A$ must contain a nonzero element of $A$. By Theorem \ref{fusionsemiringlatticeidealstructure}, $I=I_A\cup L$ is an ideal of $S$. By Lemma \ref{fusionsemiringlatticeidealgeneratingset}, $I$ is finitely generated in $S$ if and only if $I_A$ is finitely generated in $A$. This contradicts the assumption that $S$ is Noetherian. Hence $A$ is Noetherian.
	
$(\Leftarrow)$ Conversely, assume $A$ is Noetherian, and let $I$ be an ideal of $S$. If $I$ contains no nonzero element of $A$, then by Theorem \ref{fusionsemiringlatticeidealstructure}, $I=(\ell_0)$, where $\ell_0=\min(I\cap L)$. Thus $I$ is principal. Now suppose $I$ contains a nonzero element of $A$. Then $I=I_A\cup L$, where $I_A=I\cap A$ is an ideal of $A$. Since $A$ is Noetherian,
	\[
	I_A=(a_1,\dots,a_n)
	\]
	for some $a_1,\dots,a_n\in A$. By Lemma \ref{fusionsemiringlatticeidealgeneratingset},
	\[
	I=(a_1,\dots,a_n,\ell_{\min}),
	\]
	where $\ell_{\min}=\min L$. Hence $I$ is finitely generated. Therefore every ideal of $S$ is finitely generated, so $S$ is Noetherian.
\end{proof}

\begin{corollary}\label{CNoetherian}
Let $L$ be a well-ordered set. Then the overflow semiring $S= \mathbb{N}_0 \uplus_{\ord} L$ is a Noetherian semiring. In particular, the class-semiring of cardinal numbers is Noetherian.
\end{corollary}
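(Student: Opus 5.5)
The plan is to apply Theorem \ref{fusionsemiringlatticeNoetherian} directly. Its only hypothesis on $A$ is that $A$ be a Noetherian positive information algebra; the hypothesis on $L$ is that $L$ be well-ordered, and this is given in the corollary. The proof therefore has three steps. First, confirm that $\mathbb{N}_0$ is a positive information algebra, which the paper already records in Examples \ref{fusionrestrictedmaxplusalgebra}. Second, show that $\mathbb{N}_0$ is Noetherian. Third, for the ``in particular'' clause, identify $\mathscr{C}$ with $\mathbb{N}_0 \uplus_{\ord} \mathcal{L}$ and check that $\mathcal{L}$ is well-ordered.

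The substantive step is showing that every ideal $I$ of $\mathbb{N}_0$ is finitely generated. Ideals of $\mathbb{N}_0$ need not be principal. For example, $\{0\}\cup\{n\ge 2\}$ is an ideal, but it is generated by $\{2,3\}$ and is not principal.

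I would argue as follows. If $I=\{0\}$ there is nothing to do. Otherwise let $d=\gcd(I\setminus\{0\})$; this gcd is already attained by finitely many elements $a_1,\dots,a_r\in I$. By the Frobenius (Chicken McNugget) theorem, every sufficiently large multiple of $d$ is an $\mathbb{N}_0$-combination of $a_1,\dots,a_r$. Hence there is an $N$ such that every multiple of $d$ that is at least $N$ lies in $(a_1,\dots,a_r)$. All elements of $I$ are multiples of $d$, so $I$ is generated by $a_1,\dots,a_r$ together with the finitely many elements of $I$ below $N$.

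An alternative route avoids Frobenius altogether: an ascending chain of ideals in $\mathbb{N}_0$ must stabilize because of the residue-class and minimal-element structure. I would prefer the gcd argument, since it directly yields finite generation, which is the form required by \cite[Proposition 6.16]{Golan1999}.

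For the final clause, Examples \ref{fusionrestrictedmaxplusalgebra}(1) identifies $\mathscr{C}$ with $\mathbb{N}_0 \uplus_{\ord} \mathcal{L}$, and the class $\mathcal{L}$ of infinite cardinals is well-ordered by \cite[p. 48]{Jech2003}, as already used in the paper. Theorem \ref{fusionsemiringlatticeNoetherian} then gives the conclusion.

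One caveat is that $\mathscr{C}$ is a proper class rather than a set. I expect the argument of Theorem \ref{fusionsemiringlatticeNoetherian} to go through verbatim, since every ideal is still described by Lemma \ref{fusionsemiringlatticeidealgeneratingset}: an ideal either is principal, $(\ell_0)$, or is generated by generators of $I_A$ together with $\min\mathcal{L}=\aleph_0$.
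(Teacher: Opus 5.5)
Your proposal is correct and follows the paper's route. You reduce, via Theorem \ref{fusionsemiringlatticeNoetherian}, to the Noetherianity of $\mathbb{N}_0$; there the paper observes that ideals of $\mathbb{N}_0$ are exactly the submonoids of $(\mathbb{N}_0,+)$ and cites their finite generation, and your gcd-plus-Frobenius argument is the standard proof of that cited fact (for more than two generators the cofiniteness you need is usually attributed to Schur rather than the two-generator Chicken McNugget formula, which is only a naming issue). Your explicit handling of the ``in particular'' clause, via $\mathscr{C}=\mathbb{N}_0\uplus_{\ord}\mathcal{L}$ with $\mathcal{L}$ well-ordered, fills in a step the paper leaves implicit.
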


\begin{proof}
It is easy to see that $I$ is an ideal of the semiring $\mathbb{N}_0$ if and only if $I$ is a submonoid of $(\mathbb{N}_0,+)$. By Corollary 2.8 in \cite{RosalesSanchez2009}, every submonoid of $\mathbb{N}_0$ is finitely generated. It follows that the semiring $\mathbb{N}_0$ is Noetherian. Thus by Theorem \ref{fusionsemiringlatticeNoetherian}, $S$ is Noetherian finishing the proof. 
\end{proof}

Recall that a semiring $S$ is called Artinian if $S$ satisfies the descending chain condition on its ideals (see Definition 1.1 in \cite{MukherjeeSenGhosh1996}).

\begin{theorem}\label{fusionsemiringlatticeArtinian}
Let $S = A \uplus_{\ord} L$ be the overflow semiring, where $A$ is a positive information algebra and $L$ is a well-ordered set. Then $S$ is Artinian if and only if:
	\begin{enumerate}
		\item $A$ is Artinian;
		\item $L$ is finite.
	\end{enumerate}
\end{theorem}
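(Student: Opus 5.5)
Both directions rest on the classification of ideals in Theorem \ref{fusionsemiringlatticeidealstructure}. Every nonzero ideal of $S$ is either of Type I, namely $J_A \cup L$ with $J_A$ a nonzero ideal of $A$, or of Type II, namely $I_\ell = \{0\} \cup \{x \in L : x \ge \ell\}$ for some $\ell \in L$. Under inclusion, Type II ideals lie below all Type I ideals, since $I_\ell \subseteq J_A \cup L$. They form a chain that is order-reversing in $\ell$: $I_{\ell'} \subset I_\ell$ exactly when $\ell < \ell'$. Since $L$ is well-ordered, it is a chain, so this is consistent.

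For $(\Rightarrow)$, I will assume $S$ is Artinian and argue by contraposition in each condition.

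First suppose $A$ is not Artinian, witnessed by a strictly descending chain $I_1 \supset I_2 \supset \cdots$ of ideals of $A$. At most one term can be $\{0\}$, namely the last one if the chain stops, but the chain is infinite. So all the $I_n$ are nonzero, and the chain $I_n \cup L$ strictly descends in $S$ by the injectivity in Proposition \ref{fusionsemiringlatticeextcontractionideals}. This is a contradiction.

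Next suppose $L$ is infinite. A well-ordered infinite set contains a strictly increasing sequence $\ell_1 < \ell_2 < \cdots$, for instance its first $\omega$ elements, chosen recursively as $\ell_{n+1} = \min(L \setminus \{\ell_1,\dots,\ell_n\})$. Then $I_{\ell_1} \supset I_{\ell_2} \supset \cdots$ is strictly descending, because $\ell_n \in I_{\ell_n} \setminus I_{\ell_{n+1}}$. This is again a contradiction.

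For $(\Leftarrow)$, I will take a descending chain $J_1 \supseteq J_2 \supseteq \cdots$ in $S$ and split into cases.

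If some $J_m$ is $\{0\}$ or of Type II, then all later terms are also $\{0\}$ or of Type II, since containing a nonzero element of $A$ passes upward. The ideals of these two kinds number at most $|L|+1$, which is finite, so the chain stabilizes.

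Otherwise every $J_n$ is of Type I. Then the contractions $J_n \cap A$ form a descending chain of ideals of $A$, which stabilizes because $A$ is Artinian. Since $J_n = (J_n \cap A) \cup L$, the chain $J_n$ stabilizes as well.

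The main point needing care is that Type I and Type II ideals can be interleaved along a chain only in one direction (Type I above, Type II below). This follows from the observation that containing a nonzero element of $A$ forces $L \subseteq J$, as in the Type I part of the proof of Theorem \ref{fusionsemiringlatticeidealstructure}. The extraction of an $\omega$-sequence from an infinite well-order is routine.
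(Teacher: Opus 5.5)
Your proof is correct and follows essentially the same route as the paper: for necessity you lift descending chains of $A$ via $I \mapsto I \cup L$ and use the strictly descending chain $I_{\ell_1} \supset I_{\ell_2} \supset \cdots$; for sufficiency you split a descending chain into an initial Type I segment, which stabilizes by the descending chain condition in $A$, followed by a tail of Type II (or zero) ideals, of which there are only finitely many. Your version is slightly more careful than the paper's, since it explicitly accounts for the zero ideal and constructs the $\omega$-sequence in $L$ instead of just asserting it exists.
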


\begin{proof}
	$(\Rightarrow)$ If $S$ is Artinian, any descending chain in $A$ lifts to one in $S$ via $I\mapsto I\cup L$, so $A$ is Artinian. If $L$ were infinite, well-orderedness yields $\ell_1<\ell_2<\cdots$; then $I_{\ell_k}=\{0\}\cup\{\ell\in L:\ell\ge\ell_k\}$ gives $I_{\ell_1}\supset I_{\ell_2}\supset\cdots$, a contradiction. Hence $L$ is finite.
	
	$(\Leftarrow)$ Assume $A$ Artinian and $L$ finite. Let $J_1\supseteq J_2\supseteq\cdots$ be ideals in $S$. Once some $J_k$ contains no nonzero $A$-element (Type II), all later ones do as well. Thus the chain splits: finitely many Type I ideals $J_k=I_k\cup L$ ($I_k\trianglelefteq A$), then Type II ideals. The $I_k$ stabilize because $A$ is Artinian. For Type II ideals $J_k=\{0\}\cup\{\ell\in L:\ell\ge\ell_k\}$, we have $\ell_k\le\ell_{k+1}$; as $L$ is finite, the $\ell_k$ stabilize. Hence the original chain stabilizes, so $S$ is Artinian.
\end{proof}

\begin{example}\label{CnotArtinian}
Note that in the semiring $\mathbb{N}_0$, the ideals generated by $2^n$ form an infinite strictly descending chain:
\[
(1) \;\supset\; (2) \;\supset\; (4) \;\supset\; (8) \;\supset\; \cdots
\]
Hence $\mathbb{N}_0$ is not Artinian. Therefore by Theorem \ref{fusionsemiringlatticeArtinian}, for any well-ordered set $L$, the overflow semiring $S = \mathbb{N}_0 \uplus_{\ord} L$ is not Artinian.
\end{example}

\begin{remark}
The overflow semiring $S = A \uplus_{\ord} L$ is structurally analogous to the indigenous semirings
\[
S_k = \{0, 1, \dots, k, m\}
\]
introduced in \cite{BehzadipourKoppelaarNasehpour2025}. Both frameworks model computational overflow: The indigenous semiring $S_k$ truncates arithmetic above $k$ into a monolithic element $m$ and the overflow semiring $S$ expands this overflow zone into an arbitrary join-semilattice $L$ dominating a general positive information algebra $A$. Algebraically, both structures are entire and zerosumfree, their upper-layer elements collapse into idempotents, and both are austere. However, whereas $S_k$ is a finite local semiring with a fixed Krull dimension ($\dim S_k = 1$), $S$ possesses a far richer prime ideal spectrum, dynamic Noetherian and Artinian properties, and a customizable Krull dimension governed by the formula
\[
\dim S = \dim A + |L|
\]
for finite chains $L$.
\end{remark}

\section*{Acknowledgments}

The author would like to thank Prof. Dr. Henk Koppelaar for reviewing the manuscript and providing helpful comments and suggestions, which contributed to improving the presentation of this paper.

	\bibliographystyle{plain}

\end{document}